\documentclass[11pt,centertags,reqno]{amsart}

\usepackage[foot]{amsaddr}
\usepackage{mathrsfs, mathtools}
\usepackage{latexsym}
\usepackage[english]{babel}
\usepackage[T1]{fontenc}
\usepackage{amssymb}
\usepackage{url}
\usepackage{hyperref}
\usepackage{verbatim}
\usepackage{leftidx}
\mathtoolsset{showonlyrefs}

% page layout

\textwidth = 17.60cm
\textheight = 22.00cm
\oddsidemargin = -0.2in
\evensidemargin = -0.2in
\setlength{\parindent}{0pt}
\setlength{\parskip}{5pt plus 2pt minus 1pt}

\numberwithin{equation}{section} \makeatletter
\renewcommand{\subsection}{\@startsection
{subsection}{2}{0mm}{\baselineskip}{-0.25cm}
{\normalfont\normalsize\bf}} \makeatother

% theoremlike environments

\newtheorem{theorem}{Theorem}[section]
\newtheorem{lemma}[theorem]{Lemma}
\newtheorem{corollary}[theorem]{Corollary}
\newtheorem{definition}[theorem]{Definition}
\newtheorem{remark}[theorem]{Remark}
\newtheorem{proposition}[theorem]{Proposition}

\newtheorem{ass}[theorem]{Assumption}

% definitions
\def \A {\mathcal A}
\def \F {\mathcal F}

\def \H {\mathcal H}

\def \N {\mathcal N}
\def \L {\mathcal L}
\def \P {\mathbf P}
\def \Q {\mathbf Q}
\def \R {\mathbb R}

\def \I {{\mathbf 1}}
\def \bF {\mathbb F}

\def \bH {\mathbb H}
\def \bE {\mathbb E}

\newcommand{\ud}{\mathrm d}
\newcommand{\ds}{\displaystyle}
\newcommand{\esp}[2][\mathbb E] {#1\left[#2\right]}
\newcommand{\espp}[2][\mathbb E^{\mathbf P^*}] {#1\left[#2\right]}

\newcommand{\doleans}[1] {\mathcal E\left(#1\right)}

\hyphenation{ortho-go-nal
e-xi-stence mi-ni-mi-za-tion re-pre-sen-tation pri-cing ge-ne-ra-li-za-tion ge-ne-ra-ted pro-ba-bi-li-ty stra-te-gy hy-po-the-ti-cal pro-ducts
na-tu-ral di-scus-sed a-ve-ra-ge mi-ni-mi-zes e-sta-bli-shed de-com-po-si-tion mi-ni-mal nu-mÃÂ©-ra-ire re-pre-sen-ted po-si-ti-vi-ty}

\sloppy

\begin{document}

\author[C.~Ceci]{Claudia  Ceci}
\address{Claudia  Ceci, Department of Economics,
University ``G. D'Annunzio'' of Chieti-Pescara, Viale Pindaro, 42,
I-65127 Pescara, Italy.  {\it Phone number} {+39 085 453 7579}}\email{c.ceci@unich.it}

\author[K.~Colaneri]{Katia Colaneri}
\address{Katia Colaneri, Department of Economics,
University of Perugia, Via A. Pascoli, 20,
I-06123 Perugia, Italy.{\it Phone number} {+39 075 585 5218}}\email{katia.colaneri@unipg.it}

\author[A.~Cretarola]{Alessandra Cretarola}
\address{Alessandra Cretarola, Department of Mathematics and Computer Science,
 University of Perugia, Via L. Vanvitelli, 1, I-06123 Perugia, Italy. {\it Phone number } {+39 075 585 5021}}\email{alessandra.cretarola@unipg.it}

\title[FS decomposition under incomplete information ]{The F\"ollmer-Schweizer
decomposition under incomplete information}

\maketitle

\date{}

\begin{abstract}
\begin{center}
In this paper we study the F\"ollmer-Schweizer
decomposition of a square integrable random variable $\xi$ with respect to a given semimartingale $S$ under restricted information.
Thanks to the relationship between this decomposition and that of the projection of $\xi$ with respect to the given information flow, we characterize the integrand appearing in the F\"ollmer-Schweizer decomposition under partial information in the general case where  $\xi$ is not necessarily adapted to the available information level.
For partially observable Markovian models where the dynamics of $S$ depends on an unobservable stochastic factor $X$, we show how to compute the decomposition  by means of filtering problems  involving functions defined on an infinite-dimensional space. Moreover, in the case of a partially observed jump-diffusion model where $X$ is described by a pure jump process taking values in a finite dimensional space, we compute explicitly the integrand in the F\"ollmer-Schweizer decomposition by working with finite dimensional filters.
\end{center}
\end{abstract}

{\bf Keywords}: F\"ollmer-Schweizer decomposition; Partial information;  Nonlinear filtering.

{\bf AMS MSC 2010}:  60G46; 60G57; 60J25; 93E11.

\section*{Acknowledgements}
This work was partially supported by the Gruppo Nazionale per l'Analisi Matematica, la Probabilit\`{a} e le loro
Applicazioni (GNAMPA) of the Istituto Nazionale di Alta Matematica (INdAM).

\section{Introduction}

In this paper we study the F\"ollmer-Schweizer decomposition of a square integrable random variable under partial information.
More precisely, we characterize the integrand appearing in this decomposition when the %square integrable
random variable is not necessarily adapted to the available information level. Moreover, we discuss an application to a partially observable Markovian model where we compute explicitly this decomposition by means of filtering problems.

It is worth mentioning that the F\"ollmer-Schweizer decomposition has a relevant application in finance. More precisely, under suitable assumptions, the integrand $\beta^\H$ in the decomposition of the square integrable random variable representing the discounted payoff of a given European type contingent claim, provides the locally risk-minimizing hedging strategy in an incomplete financial market driven by semimartingales, see e.g. \cite{fs1991, s01} for more details. There are several papers where the full information case is discussed, see for instance \cite{ms94, ms95, s01, cvv2010}. Results for the partial information setting, can be found in~\cite{ccr2, ccc2}.

In our setting the full information flow is described by a filtration $\bF:=\{\F_t, \ t \in [0,T]\}$, with $T$ denoting a fixed and finite time horizon, while the available information level is given by a smaller filtration $\bH:=\{\H_t, \ t \in [0,T]\}$.

On a given probability space $(\Omega,\F,\P)$, we consider an $(\bF,\P)$-semimartingale $S$ satisfying the structure condition with respect to the full
information flow $\bF$ and a square integrable $\F_T$-measurable random variable $\xi$. The aim is to derive and characterize the F\"ollmer-Schweizer
decomposition of $\xi$ with respect to $S$ under the restricted information given by $\bH$,
that is
\[
\xi = U_0+ \int_0^T \beta^\H_t \ud S_t + A_T,  \quad \P\mbox{-a.s.},
\]
when the following condition on filtrations holds
\[
\F^S_t\subseteq \H_t\subseteq\F_t
\]
where $\F^S_t$ denotes the $\sigma$-algebra generated by $S_t$ up to time $t \in [0,T]$.
Here $U_0$ is a square integrable $\F_0$-measurable random variable, $\beta^\H$ is an $\bH$-predictable $S$-integrable process and $A$ is a square integrable $(\bF, \P)$-martingale orthogonal, in a weak sense, to the martingale part of $S$, see~\cite{ccr2} for more details.
It is worth mentioning that in this decomposition the classical definition of orthogonality between $(\bF,\P)$-martingales is replaced by the concept of $\bH$-weak orthogonality, see~\cite{ccr1, ccr2}.

In~\cite{ccr1} the authors studied the case when $S$ is a
local martingale under the real-word probability measure $\P$
and derived the so-called the Galtchouk-Kunita-Watanabe decomposition under restricted information for a square integrable $\F_T$-measurable random variable. The semimartingale case has been investigated in~\cite{ccc1} under the benchmark approach, which allows for a reduction to the local martingale case, and in~\cite{ccr2}, where the authors provided a version of the F\"ollmer-Schweizer decomposition working under partial information thanks to
existence and uniqueness results for the solution of backward stochastic differential equations driven by a c\`{a}dl\`{a}g  $\bF$-martingale in a partial information framework. However, how to explicitly characterize the integrand appearing in such a decomposition was still an open question.

The case when a random variable $\xi$ is measurable with respect to the smaller $\sigma$-algebra $\H_T$ is studied in~\cite{ccc2}, where the authors characterized the terms in the decomposition under the assumption that $S$ is a quasi-left continuous process. Here,  we extend these results in two directions; precisely, we consider a random variable which is not necessarily $\H_T$-measurable, and we also remove the quasi-left continuity assumption on $S$. Our first achievement is given by Theorem \ref{thm1}, which shows the relationship between the F\"ollmer-Schweizer decomposition of a square integrable $\F_T$-measurable random variable $\xi$ under partial information and that of its projection with respect to $\H_T$, i.e.  $\esp{\xi|\H_T}$. Consequently, we can provide an operative method to compute the integrand $\beta^\H$ in the decomposition above, see  Proposition \ref{casoJ}.

Finally, we discuss an application of these representation results in a Markovian framework where we assume that the underlying semimartingale dynamics is affected by an unobservable stochastic factor $X$. Thanks to the Markov properties of the model and when the available information level coincides with the natural filtration of $S$, we show how to compute the F\"ollmer-Schweizer decomposition of a given square integrable random variable via nonlinear filtering problems, which involve
functions defined on an infinite-dimensional space. Furthermore,
in the case of a partially observable jump-diffusion model where $X$ is described by a pure jump process taking values in a finite dimensional space, we can compute explicitly the integrand in the F\"ollmer-Schweizer decomposition by working with finite-dimensional filters, as shown in Theorem \ref{thm:IntegrandFS}.\\
Filtering results for jump diffusion processes can be found, e.g. in~\cite{FR2010, cco1, GrMi, cco2}. In this paper we use the same technique of~\cite{cco1, cco2} to compute the filter dynamics by means of the innovation approach.

The paper is organized as follows. In Section 2 we describe the incomplete information model. In Section 3 we provide a characterization of the integrand in the F\"ollmer-Schweizer decomposition under partial information of an $\F_T$-measurable square integrable random variable $\xi$. In Section 4 we discuss an application in a Markovian framework and apply filtering arguments to compute explicitly the decomposition.

\section{The incomplete information model}\label{sec:setting}

We fix a probability space $(\Omega,\F,\P)$,  endowed with a filtration $\bF:=\{\F_t, \ t\in [0,T]\}$ that satisfies the usual conditions of
right-continuity and completeness, where $T > 0$ is a fixed and finite time horizon; furthermore, we assume that $\F=\F_T$. On this probability space we consider an $\R$-valued square integrable, c\`{a}dl\`{a}g $(\bF,\P)$-semimartingale $S=\{S_t,\ t \in [0,T]\}$ that satisfies the structure condition (see e.g.~\cite{s01} for further details) given by
\begin{equation} \label{eq:SC}
S_t = S_0 + M_t + \int_0^t\alpha_u^\F  \ud \langle M \rangle_u,\quad t \in [0,T],
\end{equation}
where $S_0 \in L^2(\F_0,\P)$\footnote{Given a $\sigma$-algebra $\mathcal{G}$ and a probability measure $\Q$, the space $L^2(\mathcal G,\Q)$ denotes the set of all $\mathcal G$-measurable random variables $H$ such that $\mathbb E^\Q\left[|H|^2\right] = \int_\Omega |H|^2\ud {\Q} < \infty$.}, $M=\{M_t,\ t \in [0,T]\}$ is an $\R$-valued, square integrable, (c\`{a}dl\`{a}g) $(\bF,\P)$-martingale
starting at null, $\langle M\rangle=\{\langle M,M\rangle_t,\ t \in [0,T]\}$ denotes its $\bF$-predictable quadratic variation process and
$\alpha^\F=\{\alpha_t^\F, \ t \in [0,T]\}$ is an $\R$-valued, $\bF$-predictable process such that $\int_0^T\left(\alpha_s^\F\right)^2 \ud \langle
M\rangle_s < \infty$ $\P$-a.s..

In this setting the restricted information framework is described by an additional smaller filtration $\bH:=\{\H_t, \ t \in [0,T]\}$, i.e.
$$
\H_t \subseteq \F_t, \quad t \in [0,T].
$$
Denote by $\bF^S:=\{\F^S_t, \ t \in [0,T]\}$ the natural filtration of the process $S$, i.e. $\F^S_t=\sigma\{S_u, \ 0 \leq  u \leq t \leq T\}$. We remark that all filtrations are supposed to satisfy the usual hypotheses of completeness and right-continuity.
Now, we make the following assumption
\begin{equation}\label{hp:filtration}
\F^S_t \subseteq \H_t, \quad t \in [0,T].
\end{equation}
This is a sufficient requirement to provide existence of the F\"ollmer-Schweizer decomposition under partial information of a given square integrable $\F_T$-measurable random variable, see \cite[Proposition 3.3]{ccr2}.

In virtue of condition
\eqref{hp:filtration} on filtrations the process $S$ is also an $(\bH,\P)$-semimartingale and therefore it admits a semimartingale decomposition with respect to $\bH$,
i.e.
\begin{equation}\label{eq:Semi}
S_t= S_0+N_t+ R_t, \quad t \in [0,T],
\end{equation}
where $N=\{N_t,\ t \in [0,T]\}$ is  an $\R$-valued, square integrable $(\bH, \P)$-martingale with $N_0=0$ and $R=\{R_t,\ t \in [0,T]\}$ is an
$\R$-valued, $\bH$-predictable process of finite variation with $R_0=0$. In particular, since $R$ is $\bH$-predictable, even this decomposition is unique (see
e.g.~\cite[Chapter III, Theorem 34]{pp}).% and will be called the {\em canonical $\bH$-decomposition} of $S$.

We will use the notation %${}^o X$ (respectively,
${}^p X$ %)
to indicate the
%optional (respectively,
predictable %)
projection with respect
to $\bH$ under $\P$ of a given process $X=\{X_t,\ t \in [0,T]\}$ satisfying $\esp{|X_t|} < \infty$ for every $t \in [0,T]$, defined as the unique
%$\bH$-optional (respectively,
$\bH$-predictable %)
process such that  %${}^o X_\tau = \esp{X_{\tau} | \H_{\tau}}$ $\P$-a.s. on $\{\tau < \infty\}$ for
%every $\bH$-stopping time $\tau$ (respectively,
${}^p X_\tau = \esp{X_{\tau}| \H_{\tau^-}}$ $\P$-a.s. on $\{\tau < \infty\}$ for every
$\bH$-predictable stopping time $\tau$. %).

\begin{comment}
When the process $S$ has continuous trajectories, the classical decomposition of $S$ with respect to the filtration $\bH$ has the
form (see, e.g.~\cite{kxy2006} or~\cite{ms2010}):
\begin{equation*}\label{mania}
S_t= S_0+N_t+ \int_0^t {}^p \alpha_u^\F \ud \langle N \rangle_u, \quad t \in [0,T],
\end{equation*}
where the process $N$ given by
 $$
N_t = M_t + \int_0^t [\alpha_u^\F - {}^p\alpha_u^\F] \ud \langle M \rangle_u, \quad t \in [0,T],
$$
 is an $(\bH, \P)$-martingale. Recall that $M$ denotes the martingale part of $S$ under $\bF$, see \eqref{eq:SC}.   While the quadratic variation process
 $[S]$ of $S$ turns out to be $\bF^S$-adapted,  in general the predictable quadratic variation $\langle S\rangle$ of $S$ depends on the choice of the
 filtration. If $S$ is continuous, we have that
${}^{\bH} \langle N \rangle = {}^{\bF} \langle M \rangle$  and these sharp brackets are $\bF^S$-predictable. Here, the notations ${}^{\bH} \langle
\cdot \rangle$ and ${}^{\bF} \langle \cdot\rangle$
just stress the fact that the predictable quadratic variations are computed with respect to the filtrations $\bH$ and $\bF$, respectively. However, if
it does not create ambiguity, we will always write $\langle M\rangle = {}^{\bF} \langle M \rangle$ and $\langle N\rangle = {}^{\bH} \langle N \rangle$
to simplify the notation.

In presence of jumps these relationships are no longer true, since $ {}^{\bF} \langle M^d\rangle  \neq   {}^{\bH}\langle N^d\rangle $, where $M^d$ and
$N^d$ denote the discontinuous parts of the martingales $M$ and $N$, respectively. \end{comment}

In~\cite{ccc2} the authors proved the structure condition for the semimartingale $S$ with respect to the filtration $\bH$ when $S$ is supposed to be quasi-left-continuous. Here we show that the result holds for any special semimartingale.
To this aim, we introduce the integer-valued random measure associated to the jumps of $S$
\begin{equation*} \label{m}
m(\ud t,\ud z) = \sum_{s: \Delta S_s \neq 0} \delta_{(s, \Delta S_s)}(\ud t,\ud z),
\end{equation*}
where $\delta_a$ denotes the Dirac measure at  point $a$.

Then, we define the characteristics of $S$ (see e.g.~\cite[Chapter II, Section 2]{js}) $(B^\bF, C^\bF, \nu^\bF)$, with respect to $\bF$, and $(B^\bH, C^\bH, \nu^\bH)$, with respect to $\bH$, respectively.  Here $B^\bF = \{\int_0^t \alpha_r^\F \ud \langle M \rangle_r, \ t \in [0,T] \}$ and $B^\bH = R$ denote the $\bF$-predictable and $\bH$-predictable finite variation processes in decompositions \eqref {eq:SC} and \eqref{eq:Semi}, respectively. Moreover, $C^\bF = C^\bH =  \langle M^c\rangle  = \langle N^c\rangle$ (see e.g.~\cite{kxy2006} or~\cite{ms2010} for more details) and  $\nu^{\bF}(\ud t, \ud z)$ and $\nu^{\bH}(\ud t,\ud z)$  are the predictable dual projections of $m(\ud t,\ud z)$ under $\P$ with respect to $\bF$ and
$\bH$ respectively.

According to~\cite[Chapter II, Proposition 2.9]{js}, we can find a version of the characteristics satisfying
$$B^\bF_t = \int_0^t b^\F_s \ud U^\bF_s,  \quad  C^\bF =   \int_0^t c^\F_s \ud U^\bF_s,  \quad \nu^\bF(\ud t, \ud z)= \nu^\bF_t (\ud z) \ud U^\bF_t, \quad t \in [0,T],$$

$$B^\bH_t = \int_0^t b^\H_s \ud U^\bH_s, \quad  C^\bH =   \int_0^t c^\H_s \ud U^\bH_s,  \quad \nu^\bH(\ud t, \ud z)= \nu^\bH_t (\ud z) \ud U^\bH_t\quad t \in [0,T],$$
where $U^\bF$ and $U^\bH$ are  increasing, $\bF$-predictable and $\bH$-predictable processes respectively. Moreover  $U^\bF$ and $U^\bH$ are continuous if and only if $S$ is quasi-left-continuous.

Finally, $b^\F$, $c^\F$ and $b^\H$, $c^\H$ are $\bF$-predictable and $\bH$-predictable processes respectively, and $\nu^\bF_t(\ud z)$ and $\nu^\bH_ t(\ud z)$ are $\bF$-predictable and $\bH$-predictable kernels, respectively.
Note that the characteristics $(B^\bF, C^\bF, \nu^\bF)$ and $(B^\bH, C^\bH, \nu^\bH)$ satisfy
\begin{gather*}
\nu^\bF_t(\{0\})=0,\quad \nu^\bH_t(\{0\})=0, \\
\Delta B^\bF_t=\int z  \nu^\bF(\{t\}, \ud z),\quad \Delta B^\bH_t=\int z  \nu^\bH(\{t\}, \ud z), \\
c^\F=0\ \mbox{ on } \{\Delta U^\bF\neq 0\}, \quad c^\H=0\ \mbox{ on } \{\Delta U^\bH\neq 0\},
\end{gather*}
see e.g.~\cite{cvv2010} for more details.

\begin{lemma}\label{lemma:representationMN}
The martingales $M$ and $N$ have the following representation
\begin{align}
\label{eq:m}M_t &= M^c_t + \int_0^t\int_{\R} z(m(\ud s,\ud z) - \nu^\bF(\ud s,\ud z)), \quad t \in [0,T],\\
\label{eq:n}N_t &= N^c_t + \int_0^t\int_{\R} z(m(\ud s,\ud z) - \nu^\bH(\ud s,\ud z)), \quad t \in [0,T],
\end{align}
where $M^c$ and $N^c$ denote the continuous parts of $M$ and $N$ respectively.
\end{lemma}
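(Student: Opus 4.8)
The plan is to apply the canonical representation of a special semimartingale in terms of its characteristics separately under $\bF$ and under $\bH$, and then to match the terms by uniqueness. Because $S$ satisfies the structure condition \eqref{eq:SC}, it is a special $(\bF,\P)$-semimartingale with canonical decomposition $S=S_0+M+B^\bF$, and by \eqref{hp:filtration} it is also a special $(\bH,\P)$-semimartingale with canonical decomposition $S=S_0+N+B^\bH$, where $B^\bH=R$. Each of the square integrable martingales $M$ and $N$ splits uniquely into a continuous and a purely discontinuous part, $M=M^c+M^d$ and $N=N^c+N^d$. Since the continuous martingale part of a semimartingale is invariant under a change of filtration under which $S$ remains a semimartingale, one has $M^c=N^c=S^c$, in accordance with the identity $C^\bF=C^\bH=\langle M^c\rangle=\langle N^c\rangle$ already recorded. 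It therefore remains to identify $M^d$ and $N^d$ with the compensated jump integrals $\int_0^\cdot\int_\R z(m-\nu^\bF)$ and $\int_0^\cdot\int_\R z(m-\nu^\bH)$.

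For this I would invoke the general fact (see~\cite[Chapter II]{js}) that the purely discontinuous martingale part of a special semimartingale equals the compensated integral of the jump-size against the jump measure $m$, the compensator being the dual predictable projection $\nu$ of $m$ with respect to the chosen filtration. The decisive step is to check that the jumps agree: on the one hand $\Delta M_t=\Delta S_t-\Delta B^\bF_t$, while on the other hand the jump at time $t$ of $\int_0^\cdot\int_\R z(m-\nu^\bF)$ equals $\int_\R z\,m(\{t\},\ud z)-\int_\R z\,\nu^\bF(\{t\},\ud z)=\Delta S_t-\Delta B^\bF_t$, where I use that $m$ records exactly the jumps of $S$, so that $\int_\R z\,m(\{t\},\ud z)=\Delta S_t$, together with the relation $\Delta B^\bF_t=\int_\R z\,\nu^\bF(\{t\},\ud z)$ listed among the properties of the characteristics. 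Since two purely discontinuous martingales vanishing at $0$ with the same jumps coincide, this yields \eqref{eq:m}; the argument for \eqref{eq:n} is identical with $\nu^\bF$ and $B^\bF$ replaced by $\nu^\bH$ and $B^\bH=R$. Along the way one verifies that the integrands make the compensated integrals genuine square integrable martingales, which follows from the square integrability of $S$.

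The main obstacle, and exactly the point at which the quasi-left-continuity assumption of~\cite{ccc2} is removed, is the treatment of the jumps of the predictable finite variation parts $B^\bF$ and $R=B^\bH$. When $S$ is not quasi-left-continuous these processes need not be continuous (equivalently, $U^\bF$ and $U^\bH$ may jump), so $\Delta M\neq\Delta S$ and $\Delta N\neq\Delta S$ in general; it is precisely the identity $\Delta B_t=\int_\R z\,\nu(\{t\},\ud z)$, valid for both filtrations, that forces the jumps of the compensated integrals to coincide with those of $M$ and $N$, and this is the crux of the verification.
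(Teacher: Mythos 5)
Your proof is correct and follows essentially the same route as the paper: both rest on the canonical representation of the special semimartingale $S$ with respect to each of the two filtrations (Jacod--Shiryaev, Chapter II) together with uniqueness of the special semimartingale decomposition, and your jump-matching verification $\Delta M_t=\Delta S_t-\Delta B^\bF_t=\int_\R z\,m(\{t\},\ud z)-\int_\R z\,\nu^\bF(\{t\},\ud z)$, combined with the fact that a purely discontinuous square integrable martingale null at $0$ is determined by its jumps, simply makes explicit what the paper obtains by citing Corollary II.2.38 and uniqueness. Your closing remark that the identity $\Delta B_t=\int_\R z\,\nu(\{t\},\ud z)$ is what allows the quasi-left-continuity assumption of~\cite{ccc2} to be dropped is precisely the point the paper emphasizes.

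One aside in your argument is false, although you never use it: the continuous martingale part is \emph{not} invariant under shrinkage of the filtration, so $M^c=N^c=S^c$ fails in general; what is invariant is only the bracket, $C^\bF=C^\bH=\langle M^c\rangle=\langle N^c\rangle$. For example, if $S_t=W_t+\int_0^t\mu_s\,\ud s$ with $W$ an $\bF$-Brownian motion and $\mu$ an $\bF$-adapted drift that is not $\bH$-adapted, then $M^c=W$ while $N^c=W+\int_0^\cdot(\mu_s-{}^p\mu_s)\,\ud s$ is the innovation martingale, and the two processes differ even though their brackets agree. Since your identification of the purely discontinuous parts $M^d$ and $N^d$ proceeds independently of this claim, the proof of the lemma is unaffected; just delete the assertion $M^c=N^c=S^c$.
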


\begin{proof}
By~\cite[Chapter II, Corollary 2.38]{js} and the fact that $S$ is a special semimartingale we get that $S$ has the following $(\bF, \P)$-semimartingale representation
\[
S_t=S_0+S^{c,\bF}_t+ \int_0^t\int_{\R} z(m(\ud s,\ud z) - \nu^\bF(\ud s,\ud z))+P_t^\bF, \quad t \in [0,T],
\]
where $P^\bF=\{P_t^\bF,\ t \in [0,T]\}$ is an $\bF$-predictable process of finite variation and $S^{c,\bF}$ is the continuous $\bF$-martingale part of $S$.
By uniqueness of the $(\bF, \P)$-semimartingale decomposition we get \eqref{eq:m}, and in particular $\ds P_t^\bF=\int_0^t\alpha^F_s \ud \langle M\rangle_s$, for each $t \in [0,T]$, in virtue of the structure condition \eqref{eq:SC} for $S$.
Analogously, we get that the $(\bH, \P)$-semimartingale decomposition of $S$ is given by
\[
S_t=S_0+S^{c,\bH}_t+ \int_0^t\int_{\R} z(m(\ud s,\ud z) - \nu^\bH(\ud s,\ud z))+P_t^\bH, \quad t \in [0,T],
\]
for a suitable $\bH$-predictable process $P^\bH=\{P_t^\bH,\ t \in [0,T]\}$ of finite variation, where $S^{c,\bH}$ is the continuous $\bH$-martingale part of $S$. Again, by uniqueness of the $(\bH, \P)$-semimartingale decomposition we get $\eqref{eq:n}$.
\end{proof}

\begin{remark}
It is clear from Lemma \ref{lemma:representationMN} that $S,M$ and $N$ do not have the same jumps. Nevertheless, as the sets of jumps of $M$ and $N$ are subsets of the jumps of $S$, they are $\bF^S$-adapted.
\end{remark}

The predictable quadratic variation processes of $M$ and $N$ are respectively given by
$$
\langle M\rangle_t = \langle M^c\rangle_t +  \int_0^t\int_{\R} z^2\nu^\bF(\ud s,\ud z), \quad t \in [0,T],
$$
$$
\langle N\rangle _t =  \langle M^c\rangle _t + \int_0^t\int_{\R} z^2\nu^\bH(\ud s,\ud z), \quad t \in [0,T].
$$

Note that, the predictable quadratic variations of $M$ and $N$ depend on the filtrations $\bF$ and $\bH$, respectively. However, if
it does not create ambiguity, we will always write $\langle M\rangle = {}^{\bF} \langle M \rangle$ and $\langle N\rangle = {}^{\bH} \langle N \rangle$
to simplify the notation.

In the sequel we denote by $v^{p,\bH}$ the $(\bH,\P)$-predictable dual projection of an $\R$-valued, c\`adl\`ag, $\bF$-adapted process $v=\{v_t,\ t \in [0,T]\}$ of integrable
variation,
defined as
the unique $\R$-valued, $\bH$-predictable process of integrable variation, such that
\begin{equation*}\label{eq:hpredvar}
\esp{\int_0^T\varphi_t\ud v_t^{p,\bH}}=\esp{\int_0^T\varphi_t\ud v_t},
\end{equation*}
for every $\R$-valued, $\bH$-predictable (bounded) process $\varphi=\{\varphi_t, \ t \in [0,T]\}$,
see e.g. Section 4.1 of~\cite{ccr1} for further details.

Then, $S$ also satisfies the structure condition with respect to the filtration $\bH$. More precisely, we have  the following result.
\begin{proposition} \label{prop:N}
Assume that \begin{equation} \label{eq:int_cond}
\esp{\int_0^T\left(\alpha_s^\F\right)^2 \ud \langle M \rangle_s}<\infty.
\end{equation}
Then, under condition \eqref{hp:filtration} the $(\bF,\P)$-semimartingale $S$ satisfies the structure condition with respect to $\bH$, i.e.
\begin{equation*} \label{eq:SC_H}
S_t = S_0 + N_t + \int_0^t \alpha_s^\H \ud  \langle N \rangle_s, \quad t \in [0,T],
\end{equation*}
where $\langle N \rangle$ coincides with the $(\bH,\P)$-predictable dual projection of $ \langle M \rangle$, that is, $\langle N \rangle = \langle
M \rangle^{p,\bH}$ and the $\R$-valued, $\bH$-predictable process $\alpha^\H=\{\alpha_t^\H,\ t \in [0,T]\}$ given by
\begin{equation*} \label{sc1}
\alpha_t^\H:=\frac{\ud \left(\int_0^t \alpha_s^\F \ud \langle M \rangle_s \right)^{p,\bH}}{\ud   \langle M \rangle_s^{p,\bH}}, \quad t \in [0,T],
 \end{equation*}
satisfies an integrability condition analogous to \eqref{eq:int_cond}.
\end{proposition}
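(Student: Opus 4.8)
The plan is to identify the $\bH$-drift $R$ in \eqref{eq:Semi} with a predictable dual projection and then to extract its density with respect to $\langle N\rangle$ by a Hilbert-space argument. First I would test the two decompositions \eqref{eq:SC} and \eqref{eq:Semi} against an arbitrary bounded $\bH$-predictable process $\varphi$. Since $\varphi$ is also $\bF$-predictable (because $\H_t\subseteq\F_t$) and both $M$ and $N$ are square integrable martingales (for $\bF$ and $\bH$ respectively), the stochastic integrals $\int_0^\cdot\varphi\,\ud M$ and $\int_0^\cdot\varphi\,\ud N$ are martingales vanishing at $0$; hence
\[
\esp{\int_0^T\varphi_t\,\ud R_t}=\esp{\int_0^T\varphi_t\,\ud S_t}=\esp{\int_0^T\varphi_t\,\alpha_t^\F\,\ud\langle M\rangle_t}.
\]
As $R$ is $\bH$-predictable, this already shows $R=\big(\int_0^\cdot\alpha_s^\F\,\ud\langle M\rangle_s\big)^{p,\bH}$.

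Next I would prove $\langle N\rangle=\langle M\rangle^{p,\bH}$. Writing $V_t=\int_0^t\int_\R z^2\,m(\ud s,\ud z)$ for the (integrable, $\bF^S$-adapted) sum of squared jumps of $S$, the expressions recalled before the statement read $\langle M\rangle=\langle M^c\rangle+V^{p,\bF}$ and $\langle N\rangle=\langle M^c\rangle+V^{p,\bH}$, since $\int z^2\nu^\bF$ and $\int z^2\nu^\bH$ are exactly the $\bF$- and $\bH$-dual projections of $V$ and $\langle M^c\rangle=\langle N^c\rangle$. The process $\langle M^c\rangle$ is continuous and $\bF^S$-adapted, hence $\bH$-predictable and equal to its own $\bH$-dual projection; combining this with the tower property $(V^{p,\bF})^{p,\bH}=V^{p,\bH}$ (valid because every $\bH$-predictable test process is $\bF$-predictable) and linearity of the dual projection yields $\langle M\rangle^{p,\bH}=\langle M^c\rangle+V^{p,\bH}=\langle N\rangle$.

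Finally, and this is the crux, I would produce $\alpha^\H$ together with its square integrability in one stroke by Cauchy-Schwarz and Riesz representation. For bounded $\bH$-predictable $\varphi$, the identity above and Cauchy-Schwarz with respect to the positive measure $\ud\langle M\rangle\otimes\ud\P$ give
\[
\left|\esp{\int_0^T\varphi_t\,\ud R_t}\right|\le \left(\esp{\int_0^T\varphi_t^2\,\ud\langle M\rangle_t}\right)^{1/2}\left(\esp{\int_0^T(\alpha_t^\F)^2\,\ud\langle M\rangle_t}\right)^{1/2}.
\]
Because $\varphi^2$ is $\bH$-predictable, the dual projection property together with $\langle M\rangle^{p,\bH}=\langle N\rangle$ gives $\esp{\int_0^T\varphi_t^2\,\ud\langle M\rangle_t}=\esp{\int_0^T\varphi_t^2\,\ud\langle N\rangle_t}$, while the second factor is the finite constant supplied by \eqref{eq:int_cond}. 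Thus $\varphi\mapsto\esp{\int_0^T\varphi\,\ud R}$ is a bounded linear functional on the Hilbert space $L^2\big(\mathcal{P}(\bH),\,\ud\langle N\rangle\otimes\ud\P\big)$ of $\bH$-predictable integrands, so Riesz representation furnishes a unique $\bH$-predictable $\alpha^\H$ with $\esp{\int_0^T(\alpha_t^\H)^2\,\ud\langle N\rangle_t}<\infty$ and $\esp{\int_0^T\varphi\,\ud R}=\esp{\int_0^T\varphi\,\alpha^\H\,\ud\langle N\rangle}$ for all such $\varphi$. Since $R$ and $\int_0^\cdot\alpha^\H\,\ud\langle N\rangle$ are both $\bH$-predictable of finite variation with the same $\bH$-dual projection, they coincide, giving $S_t=S_0+N_t+\int_0^t\alpha_s^\H\,\ud\langle N\rangle_s$; identifying $\alpha^\H$ as the Radon-Nikodym derivative of $R=\big(\int_0^\cdot\alpha^\F\,\ud\langle M\rangle\big)^{p,\bH}$ with respect to $\langle N\rangle=\langle M\rangle^{p,\bH}$ recovers the stated formula. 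The main obstacle is precisely this last step: a priori $\ud R$ could carry a part singular with respect to $\ud\langle N\rangle$ and the density need not be square integrable, and the Cauchy-Schwarz/Riesz mechanism — which hinges on the dual projection preserving the $L^2(\ud\langle M\rangle)$ norm of $\bH$-predictable integrands — is what simultaneously rules this out and delivers the integrability bound.
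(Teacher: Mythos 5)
Your argument is correct, and it is essentially the strategy the paper relies on --- except that the paper does not write it out: its proof of Proposition \ref{prop:N} is a two-line pointer to \cite[Proposition 3.2]{ccc2}, together with the remark that Lemma \ref{lemma:representationMN} is what allows the quasi-left-continuity hypothesis of that reference to be dropped. Your three steps reproduce that strategy in a self-contained way: identifying $R$ in \eqref{eq:Semi} as the $(\bH,\P)$-predictable dual projection of $\int_0^\cdot\alpha^\F_s\,\ud\langle M\rangle_s$ by testing against bounded $\bH$-predictable integrands (legitimate, since under \eqref{hp:filtration} such integrands are also $\bF$-predictable and the stochastic integral of $S$ is filtration-independent); deducing $\langle N\rangle=\langle M\rangle^{p,\bH}$ from $\langle M\rangle=\langle M^c\rangle+\int z^2\nu^\bF$, $\langle N\rangle=\langle M^c\rangle+\int z^2\nu^\bH$ and the tower property of dual projections --- note that these two displays are exactly the point where Lemma \ref{lemma:representationMN} enters, i.e.\ where your write-up, like the paper, dispenses with quasi-left-continuity; and obtaining $\alpha^\H$ with the required square integrability via Cauchy--Schwarz on the Dol\'eans measure of $\langle M\rangle$ followed by Riesz representation in $L^2$ of the $\bH$-predictable $\sigma$-field under $\ud\langle N\rangle\otimes\ud\P$. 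Your closing observation is the right one: the only genuinely delicate issue is the absolute continuity $\ud R\ll\ud\langle N\rangle$ together with $\esp{\int_0^T(\alpha^\H_t)^2\ud\langle N\rangle_t}<\infty$, and the bound $\esp{\int_0^T\varphi_t^2\,\ud\langle M\rangle_t}=\esp{\int_0^T\varphi_t^2\,\ud\langle N\rangle_t}$ for $\bH$-predictable $\varphi$ is precisely what settles both at once, exactly as assumption \eqref{eq:int_cond} is designed to permit. No gap.
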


\begin{proof}
The proof follows by the same arguments of those of~\cite[Proposition 3.2]{ccc2}. Note that, thanks to Lemma \ref{lemma:representationMN}, here we do not need to require that $S$ has only $(\bF, \P)$-totally inaccessible jump times.
\end{proof}

\begin{remark} \label{rem:pred}
The result of Proposition  \ref{prop:N} implies that for any $\R$-valued, $\bH$-predictable process $\varphi$
satisfying
\begin{equation*}
\esp{\int_0^T\varphi_u^2 \ud \langle M \rangle_u} <\infty,
\end{equation*}
the following equalities hold
$$
\esp{\int_0^t \varphi_s  \alpha_s^\F \ud \langle M\rangle _s} = \esp{\int_0^t \varphi_s \alpha_s^\H \ud \langle N\rangle _s} =\esp{\int_0^t \varphi_s  \alpha_s^\H \ud \langle M\rangle _s}, \quad t \in [0,T].
$$
\end{remark}

\section{The F\"ollmer-Schweizer decomposition under incomplete information}

In this section we study the F\"ollmer-Schweizer decomposition under partial information of an $\F_T$-measurable square integrable random variable in the sense of Definition \ref{def:FS} below. To this aim we introduce the following classes of admissible integrands.

\begin{definition}
 The space $\Theta(\bH)$ (respectively $\Theta(\bF)$)
 consists of all $\R$-valued, $\bH$-predictable (respectively $\bF$-predictable) processes $\theta=\{\theta_t,\ t \in [0,T]\}$ satisfying the following integrability condition
\begin{gather}
\esp{\int_0^T\theta_u^2 \ud \langle N \rangle_u+\left(\int_0^T|\theta_u| |\alpha^\H_u| \ud \langle N \rangle_u\right)^2}<\infty, \\
\left(\mbox{ respectively } \esp{\int_0^T\theta_u^2 \ud \langle M \rangle_u+\left(\int_0^T|\theta_u| |\alpha^\F_u| \ud \langle M \rangle_u\right)^2}<\infty \right).
\end{gather}
\end{definition}

Notice that if  $\theta \in \Theta(\bH)$ (respectively $\theta \in \Theta(\bF)$), the stochastic integral of $\theta$ with respect to $S$, that is $\{\int_0^t \theta_r \ud S_r \ t \in [0,T]\}$, is well defined and turns out to be a square integrable $(\bH,\P)$-semimartingale (respectively $(\bF, \P)$-semimartingale).

For reader's convenience, we recall the notion of $\bH$-{\em weak
orthogonality} between square integrable $(\bF,\P)$-martingales, introduced in~\cite{ccr1}.
\begin{definition}
We say that a square integrable $(\bF,\P)$-martingale $O=\{O_t,\ t \in [0,T]\}$ is $\bH$-{\em weakly
orthogonal} to a square-integrable $(\bF,\P)$-martingale $M=\{M_t,\ t \in [0,T]\}$ if the following condition holds
\begin{equation}\label{eq:orthogcond}
\esp{O_T \int_0^T \varphi_t \ud M_t}=0,
\end{equation}
for all $\R$-valued, $\bH$-predictable processes $\varphi=\{\varphi_t,\ t \in [0,T]\}$ satisfying
\begin{equation*}
\esp{\int_0^T\varphi_u^2 \ud \langle M \rangle_u} <\infty.
\end{equation*}
\end{definition}

 \begin{remark} \label{rem:orth}
Since for any $\bH$-predictable process $\varphi$, the process
$\I_{(0,t]}(s) \varphi_s$, with $t \leq T$,
is $\bH$-predictable,  condition \eqref{eq:orthogcond}
implies that
$$
\esp{O_T \int_0^t \varphi_s \ud M_s}=0,\quad \forall t \in [0,T],
$$
and
by conditioning with respect to $\F_t$, we have
$$
\esp{O_t \int_0^t \varphi_s \ud M_s}= \esp{\int_0^t \varphi_s \ud \langle M, O \rangle _s }= 0, \quad \forall t \in [0,T].
$$
Hence, if $O$ and $M$ are strongly orthogonal (i.e. $\langle M, O \rangle _t=0$ $\P-\mbox{a.s.}$ for every $t \in [0,T]$) then they are also $\bH$-weakly
orthogonal. Moreover, in the case of full information, i.e. $\bH=\bF$,  or when $O$ and $M$ are also $(\bH,\P)$-martingales, the $\bH$-weak orthogonality is equivalent to the strong orthogonality condition (see e.g. Lemma 2 and Theorem 36, Chapter IV, page 180 of~\cite{pp} for a rigorous proof).
\end{remark}

The definition of the F\"{o}llmer-Schweizer decomposition under partial information of a given square integrable random variable provided in~\cite{ccr2}, is given below.

\begin{definition}\label{def:FS}
Given $\xi \in L^2(\F_T, \P)$, we say that $\xi$ admits a {\em F\"{o}llmer-Schweizer decomposition under partial information} with respect to $\bH$ and $S$,  if there exist a random variable $U_0 \in L^2(\F_0, \P)$, an $\bH$-predictable process $\beta^\H\in \Theta(\bF)$ and a square integrable $(\bF,\P)$-martingale $A=\{A_t,\ t \in [0,T]\}$  with $A_0=0$,
 $\bH$-weakly
orthogonal to the $\bF$-martingale part $M$ of $S$,  such that
\begin{equation}\label{eq:fsdecomposition}
\xi = U_0 + \int_0^T \beta^\H_t \ud S_t + A_T \quad \P-\mbox{a.s.}.
\end{equation}
\end{definition}

In the sequel, we make the following assumptions.
 \begin{ass} \label{ass:existence}
\begin{itemize}
\item[]
\item[(i)]
There exists a deterministic function $\rho:\R_+ \rightarrow \R_+$ with $\rho(0^+)=0$ such that, $\P$-a.s.,
\begin{equation} \label{ass:bracket}
\langle M\rangle_t - \langle M\rangle_s \leq \rho(t-s), \quad \forall\ 0 \leq s \leq t \leq T.
\end{equation}
\item[(ii)] There exists a constant $k\geq 0$ such that
\begin{equation} \label{bound2}
|\alpha_t^\F| \leq k S_t\quad (\P \otimes \langle M\rangle)-\mbox{a.e.\ on}\ \Omega \times [0,T].
\end{equation}
\end{itemize}
\end{ass}
%\begin{remark} \label{R1}
%The orthogonality condition given in \eqref{eq:orthogcond} is weaker than the classical strong orthogonality condition, see e.g~\cite{js} or~\cite{pp}. Indeed, set $N_t=\int_0^t \varphi_s\ud M_s$, for each $t \in [0,T]$, where $\varphi \in \mathcal M_{\H}^2(0,T)$.
%If $\psi \in \mathcal L_\F^2(0,T)$ is such that
%$$
%\langle \psi,M\rangle_t=0\quad \P-\mbox{a.s.}, \quad \forall t \in [0,T],
%$$
%then
%$$
%\langle \psi,N\rangle_t=\int_0^t \varphi_s\ud \langle \psi,M\rangle_s=0\quad \P-\mbox{a.s.}, \quad \forall t \in [0,T].$$  Consequently, $\psi N$ is an $\bF$-martingale null at zero, that implies $$\esp{\psi_t N_t}=0, \quad \forall t \in [0,T],$$ and in particular condition \eqref{eq:orthogcond}.\end{remark}
In~\cite{ccr2}, it is proved that decomposition \eqref{eq:fsdecomposition} exists and it is unique under either condition \eqref{hp:filtration} and Assumption \ref{ass:existence} or  condition \eqref{ass:bracket} and the existence of a constant $c\geq 0$ such that
\begin{equation*} \label{bound1}
|\alpha_t^\F| \leq c, \quad (\P \otimes \langle M\rangle)-\mbox{a.e.\ on}\ \Omega \times [0,T].
\end{equation*}

Let $\xi \in L^2(\F_T, \P)$. Now, the problem is how to compute the integrand in decomposition \eqref{eq:fsdecomposition}. If the process $S$ has continuous trajectories, the integrand can be calculated
by switching to a particular martingale measure $\P^*$, the so-called {\em minimal martingale measure}, and computing the
Galtchouk-Kunita-Watanabe decomposition of $\xi$ with respect to $S$ under $\P^*$. Concerning the more general case, that is, when $S$ is only c\`{a}dl\`{a}g, there are few results in literature, see e.g.~\cite{cvv2010} for the complete information case and~\cite{ccc2} for the incomplete information setting when $\xi$ is $\H_T$-measurable.

Here, the idea is to work with the projection of $\xi$ with respect
to $\H_T$, i.e.
\begin{equation} \label{claimproie}
{}^o\xi := \esp{ \xi | \H_T}.
\end{equation}
By Jensen's inequality we get that ${}^o\xi \in L^2(\H_T,\P)$ and since $S$ is $\bH$-adapted in virtue of condition \eqref{hp:filtration}, under suitable assumptions (see Remark   \ref{tradeoff}  below) ${}^o\xi$
admits a (classical) F\"ollmer-Schweizer
decomposition with respect to $S$ and $\bH$, that is, there exist a random variable $\tilde U_0 \in L^2(\H_0, \P)$, a process $\tilde \beta^\H\in \Theta(\bH)$ and a square integrable $(\bH,\P)$-martingale $\tilde A=\{\tilde A_t,\ t \in [0,T]\}$  with $\tilde A_0=0$, which is
 $\bH$-strongly orthogonal to the $\bH$-martingale part $N$ of $S$, such that
 \begin{equation}\label{eq:fsdecomposition2}
{}^o\xi = \tilde U_0 + \int_0^T \tilde \beta^\H_t \ud S_t + \tilde A_T \quad \P-\mbox{a.s.}.
\end{equation}

\begin{remark} \label{tradeoff}
A sufficient condition for existence and uniqueness of
decomposition \eqref{eq:fsdecomposition2} is the uniform boundedness of the mean-variance tradeoff process $K := \{\int_0^t (\alpha^\H_u) ^2 \ud \langle N \rangle _u,\ t \in [0,T]\}$ in $t$ and $\omega$ (see~\cite[Theorem 3.4]{ms95}) or the fulfillment by $\langle N\rangle $ and $\alpha^\H$
of Assumption \ref{ass:existence}.
\end{remark}

In the sequel we prove that the integrands in decompositions  \eqref{eq:fsdecomposition}  and \eqref{eq:fsdecomposition2} coincide, that is, $ \beta^\H_t  = \tilde \beta^\H_t $, $\P$-a.s., for each $t\in [0,T]$. As a consequence, we can compute $\beta^\H$ in terms of the Galtchouk-Kunita-Watanabe decomposition of ${}^o\xi$  with respect to $S$ and $\bH$ under the minimal martingale measure $\P^*$ (if it exists), see Definition \ref{def:MMM} and Proposition \ref{casoJ} below.

\begin{theorem}\label{thm1}
Let $\xi \in  L^2(\F_T,\P)$. Under condition \eqref{hp:filtration} and Assumption \ref{ass:existence}, the random variable $\xi$ admits the F\"ollmer-Schweizer decomposition under partial information given in \eqref{eq:fsdecomposition}, and ${}^o\xi$ admits  a F\"ollmer-Schweizer decomposition \eqref{eq:fsdecomposition2} with
$$
\tilde U_0=  \esp{U_0 |\H_0} , \quad \tilde \beta^\H_t =  \beta^\H_t \quad \tilde A_t= \esp{A_t|\H_t} +  \esp{U_0 |\H_t}  - \tilde U_0, \quad t \in [0,T].
$$
\end{theorem}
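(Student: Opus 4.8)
The plan is to obtain \eqref{eq:fsdecomposition2} by projecting the partial-information decomposition \eqref{eq:fsdecomposition} of $\xi$ onto $\H_T$ and then verifying that the resulting triple satisfies every requirement in the definition of the classical F\"ollmer--Schweizer decomposition with respect to $\bH$; uniqueness (Remark \ref{tradeoff}) then identifies it with \eqref{eq:fsdecomposition2}. Existence of \eqref{eq:fsdecomposition} is guaranteed by \cite{ccr2} under condition \eqref{hp:filtration} and Assumption \ref{ass:existence}. Since $\beta^\H$ is $\bH$-predictable and $S$ is an $\bH$-semimartingale, the integral $\int_0^\cdot\beta^\H\ud S$ computed under $\bF$ coincides with the one computed under $\bH$ and is thus $\bH$-adapted; in particular $\int_0^T\beta^\H_t\ud S_t$ is $\H_T$-measurable. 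Applying $\esp{\,\cdot\,|\H_T}$ to \eqref{eq:fsdecomposition} gives $\,{}^o\xi=\esp{U_0|\H_T}+\int_0^T\beta^\H_t\ud S_t+\esp{A_T|\H_T}$, which equals $\tilde U_0+\int_0^T\beta^\H_t\ud S_t+\tilde A_T$ for the announced $\tilde U_0$ and $\tilde A$, so that $\tilde\beta^\H=\beta^\H$ and the decomposition identity hold by construction.

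Next I would check that $\tilde A_t:=\esp{A_t|\H_t}+\esp{U_0|\H_t}-\tilde U_0$ is a square integrable $(\bH,\P)$-martingale null at $0$. Square integrability follows from $A_T,U_0\in L^2$ and the $L^2$-contractivity of conditional expectation, while $\tilde A_0=0$ because $A_0=0$ and $\esp{U_0|\H_0}=\tilde U_0$. The martingale property follows from the tower property and the $(\bF,\P)$-martingale property of $A$: for $s\le t$, $\esp{\esp{A_t|\H_t}|\H_s}=\esp{A_t|\H_s}=\esp{A_s|\H_s}$ and $\esp{\esp{U_0|\H_t}|\H_s}=\esp{U_0|\H_s}$. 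Admissibility $\beta^\H\in\Theta(\bH)$ is inherited from $\beta^\H\in\Theta(\bF)$: since $\beta^\H$ is $\bH$-predictable and $\langle N\rangle=\langle M\rangle^{p,\bH}$ by Proposition \ref{prop:N}, one has $\esp{\int_0^T(\beta^\H_u)^2\ud\langle N\rangle_u}=\esp{\int_0^T(\beta^\H_u)^2\ud\langle M\rangle_u}$, and the mixed term is controlled through the relations of Remark \ref{rem:pred} and the integrability of $\alpha^\H$.

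The only substantial point is the $\bH$-strong orthogonality of $\tilde A$ to $N$. As $\tilde A$ and $N$ are both $(\bH,\P)$-martingales, the argument in Remark \ref{rem:orth} reduces this to proving $\esp{\tilde A_T\int_0^T\varphi_t\ud N_t}=0$ for every $\bH$-predictable $\varphi$ with $\esp{\int_0^T\varphi_u^2\ud\langle N\rangle_u}<\infty$. Since $\int_0^T\varphi_t\ud N_t$ is $\H_T$-measurable and $\esp{\tilde U_0\int_0^T\varphi_t\ud N_t}=0$ (because $\tilde U_0$ is $\H_0$-measurable and $\int_0^\cdot\varphi\ud N$ is an $\bH$-martingale null at $0$), this reduces to $\esp{(A_T+U_0)\int_0^T\varphi_t\ud N_t}=0$. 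Comparing the two decompositions of $S$ yields $N-M=B^\bF-B^\bH$, with $B^\bF=\int_0^\cdot\alpha^\F_s\ud\langle M\rangle_s$ and $B^\bH=\int_0^\cdot\alpha^\H_s\ud\langle N\rangle_s=(B^\bF)^{p,\bH}$ (Proposition \ref{prop:N}). Splitting $\int\varphi\ud N=\int\varphi\ud M+\int\varphi\ud(B^\bF-B^\bH)$, the $M$-part vanishes: $\esp{A_T\int_0^T\varphi\ud M}=0$ by the $\bH$-weak orthogonality of $A$ to $M$ (whose integrability hypothesis transfers since $\langle N\rangle=\langle M\rangle^{p,\bH}$), and $\esp{U_0\int_0^T\varphi\ud M}=0$ because $U_0$ is $\F_0$-measurable and $\int_0^\cdot\varphi\ud M$ is an $(\bF,\P)$-martingale null at $0$.

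It remains to cancel the finite variation part, and this is the crux of the argument. Setting $\bar A_t:=\esp{A_T+U_0|\H_t}$, an $(\bH,\P)$-martingale with left limits $\bar A_{t-}=\esp{A_T+U_0|\H_{t-}}$, I would invoke the dual-predictable-projection identity $\esp{\int_0^T Y_s\ud v_s}=\esp{\int_0^T {}^pY_s\ud v_s^{p,\bH}}$ (Section 4.1 of \cite{ccr1}) with the integrand $Y_s=(A_T+U_0)\varphi_s$, whose $\bH$-predictable projection is ${}^pY_s=\varphi_s\bar A_{s-}$. Applied once with $v=B^\bF$ (so $v^{p,\bH}=B^\bH$) and once with $v=B^\bH$ (its own $\bH$-dual projection), this gives
\[
\esp{(A_T+U_0)\int_0^T\varphi_t\ud B^\bF_t}=\esp{\int_0^T\varphi_s\bar A_{s-}\ud B^\bH_s}=\esp{(A_T+U_0)\int_0^T\varphi_t\ud B^\bH_t},
\]
so the finite variation contribution cancels and $\esp{(A_T+U_0)\int_0^T\varphi_t\ud N_t}=0$, as required. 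The main obstacle is precisely this step: one must push the non-$\bH$-predictable weight $A_T+U_0$ through the two finite variation integrators by replacing it with its $\bH$-predictable projection $\bar A_{-}$, which is legitimate only because $B^\bH$ is the $\bH$-dual predictable projection of $B^\bF$, the integrability required being supplied by Assumption \ref{ass:existence} and the Cauchy--Schwarz inequality. Having verified all the defining properties, uniqueness of the classical F\"ollmer--Schweizer decomposition under $\bH$ confirms that the constructed triple is the one in \eqref{eq:fsdecomposition2}.
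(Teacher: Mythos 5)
Your proposal is correct and follows essentially the same route as the paper: condition \eqref{eq:fsdecomposition} on $\H_T$, verify that $\tilde A$ is a square integrable $(\bH,\P)$-martingale null at $0$ and that $\beta^\H\in\Theta(\bH)$ via $\langle N\rangle=\langle M\rangle^{p,\bH}$ and Cauchy--Schwarz, and obtain strong orthogonality of $\tilde A$ to $N$ by splitting $\int_0^T\varphi_s\,\ud N_s$ into $\int_0^T\varphi_s\,\ud M_s$ (killed by the $\bH$-weak orthogonality of $A$ and the $\F_0$-measurability of $U_0$) plus the difference of the finite variation parts (killed by passing to predictable projections and using that $\int_0^\cdot\alpha^\H_s\,\ud\langle N\rangle_s$ is the $\bH$-dual predictable projection of $\int_0^\cdot\alpha^\F_s\,\ud\langle M\rangle_s$). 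The only differences are cosmetic: you handle $A_T$ and $U_0$ jointly where the paper treats them separately, and your identity $\esp{\int_0^T Y_s\,\ud v_s}=\esp{\int_0^T {}^pY_s\,\ud v_s^{p,\bH}}$ should be understood as the same two-step projection argument the paper carries out via the replacement of $A_T$ by $A_{s^-}$ followed by Remark \ref{rem:pred}.
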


\begin{proof}
Recall that under condition \eqref{hp:filtration} and Assumption \ref{ass:existence}, the random variable $\xi$  admits the F\"ollmer-Schweizer decomposition under partial information given in \eqref{eq:fsdecomposition} thanks to the results of~\cite{ccr2}.
By \eqref{claimproie} and conditioning equation \eqref{eq:fsdecomposition} with respect to $\H_T$,  we obtain that
$$
{}^o\xi = \esp{U_0|\H_T} + \int_0^T \beta^\H_t \ud S_t + \esp{A_T|\H_T} \quad \P-\mbox{a.s.}.
$$
Set
 $$
 \hat A_t := \esp{A_t|\H_t} +  \esp{U_0 |\H_t}  -  \esp{U_0 |\H_0},
 $$
 for every $t \in [0,T]$.
Then
 $$
 {}^o\xi =  \esp{U_0 |\H_0} + \int_0^T \beta^\H_t \ud S_t + \hat A_T \quad \P-\mbox{a.s.}.
 $$
It is easy to verify that
$\hat A=\{\hat A_t, \ t \in [0,T]\}$ is an $(\bH,\P)$-martingale such that $\hat A_0=0$.  If we prove that that $\beta^\H\in \Theta(\bH)$ and $\hat A$ is strong orthogonal to $N$, we obtain the thesis with the choice $\tilde U_0=  \esp{U_0 |\H_0}$,  $\tilde \beta^\H=  \beta^\H$ and $\tilde A= \hat A$.

For the first part, we prove that every $\bH$-predictable process in $\Theta(\bF)$ also belongs $\Theta(\bH)$. Let $\theta$ be an $\bH$-predictable process in $\Theta(\bF)$. Then, relationship $\langle N\rangle = \langle M\rangle^{p,\bH}$ implies
\begin{equation*} \label{f1}
\esp{\int_0^T \theta_s^2 \ud \langle N\rangle_s }=
\esp{\int_0^T \theta_s^2 \ud \langle M\rangle_s}.
\end{equation*}
On the other hand, we get
\begin{equation*} \label{f2}
\esp{ \int_0^T (\theta_s\alpha^\H_s)^2 \ud  \langle N\rangle_s } \leq \esp{ \int_0^T (\theta_s\alpha^\F_s)^2 \ud  \langle M\rangle_s }.
\end{equation*}
Then, thanks to Remark \ref{rem:pred} with the choice $\varphi_s = \theta^2_s\alpha^\H_s$ and the Cauchy-Schwarz inequality, we have
$$
\esp{ \int_0^T (\theta_s\alpha^\H_s)^2 \ud  \langle N\rangle_s } = \esp{ \int_0^T \theta^2_s\alpha^\H_s \alpha^\F_s \ud  \langle M\rangle_s } \leq \left( \esp{ \int_0^T (\theta_s\alpha^\H_s)^2 \ud  \langle N\rangle_s } \right) ^{\frac{1}{2}} \left( \esp{ \int_0^T (\theta_s\alpha^\F_s)^2 \ud  \langle M\rangle_s } \right) ^{\frac{1}{2}}.
$$
Hence, $\theta \in \Theta(\bH)$.
As concerns strong orthogonality,  taking  Remark \ref{rem:orth} into account we show that condition
\begin{equation} \label{c1}
\esp{ \hat A_T \int_0^T \varphi_s \ud N_s}=0
\end{equation}
holds for all $\R$-valued, $\bH$-predictable processes $\varphi$ satisfying
\begin{equation*}
\esp{\int_0^T\varphi_u^2 \ud \langle M \rangle_u} <\infty.
\end{equation*}
First, we get that
$$
\esp{ \hat A_T \int_0^T\varphi_s \ud N_s} = \esp{  A_T \int_0^T \varphi_s \ud N_s} + \esp{ \eta \int_0^T \varphi_s \ud N_s} =
\esp{  A_T \int_0^T \varphi_s \ud N_s},
$$
where $\eta := \esp{U_0 |\H_T}  - \esp{U_0|\H_0}$ and  $\{\int_0^t \varphi_s \ud N_s,\ t \in [0,T]\}$  is an $(\bH,\P)$-martingale null at $t=0$.

Finally, since $A$ is $\bH$-weakly orthogonal to $M$, by the properties of the predictable projection we have
\begin{align*}
\esp{  A_T \int_0^T \varphi_s \ud N_s} & = \esp{  A_T \int_0^T \varphi_s \ud M_s} + \esp{  A_T \int_0^T \varphi_s ( \alpha^\F_s \ud \langle M \rangle _s  -  \alpha^\H_s \ud \langle N \rangle _s)}\\
& = \esp{\int_0^T  A_{s^-} \varphi_s ( \alpha^\F_s \ud \langle M \rangle _s  -  \alpha^\H_s\ud \langle N \rangle _s)} = 0,
\end{align*}
where the last equality follows by Remark \ref{rem:pred}, and this yields \eqref{c1}.
\end{proof}

It is helpful to recall the definition of the minimal martingale measure with respect to the filtration $\bF$.
\begin{definition} \label{def:MMM}
An equivalent martingale measure $\P^*$ for $S$ with square integrable density $\ds \frac{\ud \P^*}{\ud \P}$ is called
{\em minimal martingale measure} (for $S$) if $\P^*=\P$ on $\F_0$ and if every square integrable $(\bF, \P)$-martingale, strongly orthogonal to the
 $\bF$-martingale part $M$ of $S$, is also an $(\bF, \P^*)$-martingale.
\end{definition}
We assume that
  \begin{equation}\label{eq:hpAS2}
  1-\alpha_t^\F \Delta M_t>0 \quad \P-\mbox{a.s.} \quad \forall \ t \in [0,T],
  \end{equation}
and
 \begin{equation}\label{eq:square_integrability_L}
\esp{\exp\left\{\frac{1}{2}\int_0^T \left(\alpha_t^\F\right)^2 \ud \langle M^c\rangle_t+ \int_0^T \left(\alpha_t^\F\right)^2 \ud \langle M^d \rangle_t
\right\}} < \infty,
  \end{equation}
% where $M^c$ and $M^d$ denote the continuous and the discontinuous parts of the $(\bF,\P)$-martingale $M$ respectively and $\alpha^\F$ is given in
% \eqref{eq:SC},
where $M^d$ denotes the discontinuous part of the $(\bF,\P)$-martingale $M$, and define the process $L=\{L_t,\ t
  \in [0,T]\}$ by setting
  \begin{equation*}\label{eq:MMMpartial}
L_t:=\doleans{-\int \alpha_u^\F \ud M_u}_t,\quad t \in [0,T],
\end{equation*}
where the notation $\mathcal{E}(Y)$ refers to the Dol\'{e}ans-Dade exponential of an $(\bF,\P)$-semimartingale $Y$.
 Assuming $L$ to be square integrable, under conditions \eqref {eq:hpAS2} and \eqref {eq:square_integrability_L}  by the Ansel-Stricker Theorem (see~\cite{AS}) there exists the minimal martingale measure $\P^*$ for $S$, which is defined by
\begin{equation}\label{eq:MMMpartial_def}
L_t=\left.\frac{\ud \P^*}{ \ud \P}\right|_{\F_t}\quad t \in [0,T].
\end{equation}

%We observe that condition \eqref{eq:square_integrability_L} implies that the nonnegative $(\bF, \P)$-local martingale $L$ is indeed a square-integrable
%$(\bF, \P)$-martingale, see e.g. ~\cite{ps2008}, and also that \eqref{eq:int_cond} holds true.

In the rest of the paper we also make the following assumption.
\begin{ass}\label{ass:pstar}
Assume that ${}^o\xi \in L^2(\H_T,\P^*)$ and $S_t\in L^2(\H_t,\P^*)$, for each $t \in [0,T]$.
\end{ass}
Since $S$ is an $(\bH, \P^*)$-martingale, the random variable ${}^o\xi$ admits the
Galtchouk-Kunita-Watanabe decomposition with respect to $S$ and $\bH$ under $\P^*$, i.e.
\begin{equation}\label{GKW2}
{}^o\xi =  \widehat U_0 + \int_0^T  H^\H_u \ud S_u +G_T \quad \P^*-\mbox{a.s.},
\end{equation}
where $\widehat U_0 \in L^2(\H_0,\P^*)$, $H^\H=\{H_t^\H, \ t \in [0,T]\}$ is an $\R$-valued  $\bH$-predictable
process satisfying $\espp{\int_0^T(H_u^{\H})^2 \ud \langle S\rangle_u}<\infty$, and $G=\{G_t, \ t \in [0,T]\}$  is  a square-integrable $(\bH,\P^*)$-martingale with $G_0=0$, strongly orthogonal to $S$ under $\P^*$.

Define the process $V^\H=\{V_t^\H, \ t \in [0,T]\}$ by setting
\begin{equation}\label{def:vh}
V_t^\H:= \bE^{\P^*}[ {}^o\xi  | \H_t],\quad t \in [0,T].
\end{equation}
By decomposition \eqref{GKW2}, we get that
\begin{equation}\label{GKWV}V_t^\H = \widehat U_0 + \int_0^t  H^\H_u \ud S_u +G_t \quad \P^*-\mbox{a.s.};
\end{equation}
hence, we can compute $H^\H$ as
\begin{equation}\label{eq:strategia_filtro1}
H^\H_t=\frac{\ud {}^{*,\bH}\langle V^\H, S\rangle_t}{\ud  {}^{*,\bH} \langle S\rangle_t}, \quad t \in [0,T],
\end{equation}
where ${}^{*,\bH}\langle \cdot \rangle$ denotes the sharp bracket computed with respect to $\bH$ and $\P^*$.

The following result characterizes the process $\beta^\H$.

\begin{proposition}\label{casoJ}
Let $\xi \in  L^2(\F_T,\P)$. Under Assumptions \ref{ass:existence} and \ref{ass:pstar},
the integrand $\beta^\H$ in the F\"ollmer-Schweizer decomposition under partial information \eqref{eq:fsdecomposition} is given by
\begin{equation}\label{eq:betaH}
\beta^\H_t=H^\H_t+\phi^\H_t, \quad  \P-\rm{a.s.} \quad t \in [0,T],
\end{equation}
where $H^\H$ is the integrand in the Galtchouk-Kunita-Watanabe decomposition of ${}^o\xi$  with respect to $S$ and $\bH$ under $\P^*$,  see \eqref{GKW2}, given in \eqref{eq:strategia_filtro1} and
\begin{equation}\label{phiH}
\quad \phi^\H_t =\frac{ \ud {}^\bH\langle [G, S],  \int_0^\cdot \alpha^\H_r \ud N_r \rangle_t }
{ \ud {}^\bH\langle S\rangle_t} , \quad \P-\mbox{a.s.} \quad t \in [0,T],
\end{equation}
where the sharp brackets ${}^{\bH}\langle \cdot \rangle$ are computed with respect to $\bH$ and $\P$\footnote{The $\bF$-predictable (respectively $\bH$-predictable) quadratic variation of the semimartingale $S$, denoted by $\langle S \rangle$ (respectively ${}^{\bH}\langle S \rangle$), is the $\bF$-predictable (respectively $\bH$-predictable) compensator of the quadratic variation process $[S]$.}.
\end{proposition}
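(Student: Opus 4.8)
The plan is to deduce Proposition \ref{casoJ} from Theorem \ref{thm1} together with a comparison of the two decompositions of ${}^o\xi$: the classical F\"ollmer-Schweizer decomposition \eqref{eq:fsdecomposition2} taken under $\P$, and the Galtchouk-Kunita-Watanabe decomposition \eqref{GKW2} taken under the minimal martingale measure $\P^*$. By Theorem \ref{thm1} we already have $\beta^\H=\tilde\beta^\H$, where $\tilde\beta^\H$ is the integrand in \eqref{eq:fsdecomposition2}; thus it suffices to show $\tilde\beta^\H=H^\H+\phi^\H$. Since decomposition \eqref{eq:fsdecomposition2} exists and is unique (Remark \ref{tradeoff}), the cleanest route is to \emph{exhibit} a decomposition of ${}^o\xi$ of the required type whose integrand is $H^\H+\phi^\H$ and then invoke uniqueness. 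Concretely, starting from \eqref{GKW2}, I set $\tilde\beta^\H:=H^\H+\phi^\H$ and define the candidate remainder
\[
\tilde A_t:=\widehat U_0-\tilde U_0+G_t-\int_0^t\phi^\H_r\,\ud S_r,\qquad t\in[0,T],
\]
choosing $\tilde U_0=\widehat U_0$ (legitimate because $\P^*=\P$ on $\F_0$, see Definition \ref{def:MMM}). With this choice ${}^o\xi=\tilde U_0+\int_0^T(H^\H_r+\phi^\H_r)\ud S_r+\tilde A_T$ holds $\P$-a.s.\ and $\tilde A_0=0$, so the task becomes to pin down $\phi^\H$ so that $\tilde A$ is a square integrable $(\bH,\P)$-martingale that is $\bH$-strongly orthogonal to $N$.

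First I would translate the two requirements on $\tilde A$ into bracket identities, using the $(\bH,\P)$-decomposition $S=S_0+N+\int_0^\cdot\alpha^\H_r\ud\langle N\rangle_r$ from Proposition \ref{prop:N} and \eqref{eq:Semi}. Because the finite variation part $B:=\int_0^\cdot\alpha^\H_r\ud\langle N\rangle_r$ is $\bH$-predictable, the cross term $[B,N]=\int_0^\cdot\Delta B_r\,\ud N_r$ is already an $(\bH,\P)$-local martingale and hence carries no predictable compensator; the analogous remark applies to $[B,G]$ up to the drift of $G$. Computing $[\tilde A,N]=[G,N]-\int_0^\cdot\phi^\H_r\,\ud[S,N]_r$ and passing to $(\bH,\P)$-compensators, the requirement $\langle\tilde A,N\rangle^{\bH,\P}=0$ reduces, after collecting the finite-variation contributions into the denominator ${}^\bH\langle S\rangle$, to
\[
\int_0^t\phi^\H_r\,\ud{}^\bH\langle S\rangle_r=\bigl(\,\text{$(\bH,\P)$-compensator of }[G,S]\,\bigr)_t,\qquad t\in[0,T].
\]
This is exactly \eqref{phiH} provided the compensator of $[G,S]$ is identified with ${}^\bH\langle[G,S],\int_0^\cdot\alpha^\H_r\ud N_r\rangle$. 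Simultaneously one must check that, with this $\phi^\H$, the $\P$-drift of $G$ is matched by that of $\int\phi^\H\ud S$, so that $\tilde A$ is genuinely a $\P$-martingale; this compatibility is guaranteed by the existence of \eqref{eq:fsdecomposition2}.

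The main obstacle is the evaluation of the $(\bH,\P)$-compensator of $[G,S]$. Since $G$ is $\bH$-strongly orthogonal to $S$ under $\P^*$, the covariation $[G,S]$ is an $(\bH,\P^*)$-local martingale, so its $\P$-drift is produced \emph{entirely} by the change of measure from $\P^*$ back to $\P$. The crucial structural input is that the minimal martingale measure density has jumps proportional to $\Delta S$ with $\bF$-predictable coefficient $-\alpha^\F$ (this is read off from $L=\mathcal E(-\int\alpha^\F\ud M)$ and \eqref{eq:hpAS2}), which upon projection to $\bH$ produces the coefficient $-\alpha^\H$ and the direction $\int_0^\cdot\alpha^\H_r\ud N_r$; it is precisely this linear-in-$\Delta S$ structure, combined with $\langle G,S\rangle^{*,\bH}=0$, that converts the Girsanov drift of the $\P^*$-martingale $[G,S]$ into its $(\bH,\P)$-predictable covariation with $\int_0^\cdot\alpha^\H_r\ud N_r$. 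I expect the genuinely delicate points to be concentrated at the $\bH$-predictable jump times of $\langle N\rangle$ (where quasi-left-continuity fails, and where $U^\bH$ has jumps): there one must verify that the cross terms between $G$ and the predictable finite variation part $B$ contribute nothing beyond what is already recorded in $[G,S]$, and that the passage between the brackets $\langle N\rangle$ and ${}^\bH\langle S\rangle$ and between $[G,N]$ and $[G,S]$ is consistent as a Radon-Nikodym quotient.

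Finally I would verify integrability: using Assumption \ref{ass:existence}, Assumption \ref{ass:pstar}, and the membership $H^\H\in L^2(\ud{}^{*,\bH}\langle S\rangle)$ from \eqref{GKW2}, one checks that $\phi^\H$ and $H^\H+\phi^\H$ are $S$-integrable in the sense of $\Theta(\bH)$ and that $\tilde A$ is square integrable with $\tilde A_0=0$. Once $\tilde A$ is shown to be a square integrable $(\bH,\P)$-martingale $\bH$-strongly orthogonal to $N$, uniqueness of the F\"ollmer-Schweizer decomposition \eqref{eq:fsdecomposition2} yields $\tilde\beta^\H=H^\H+\phi^\H$, and Theorem \ref{thm1} gives $\beta^\H=\tilde\beta^\H$, which is \eqref{eq:betaH}.
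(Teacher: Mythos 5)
Your opening move --- invoking Theorem \ref{thm1} to reduce the claim to the identity $\tilde\beta^\H=H^\H+\phi^\H$ for the F\"ollmer--Schweizer decomposition of ${}^o\xi$ --- is exactly the paper's first step. For that remaining identity, however, the paper does not argue directly: it cites formula (4.6) of Proposition 4.8 in \cite{ccc2} together with Theorem 3.2 of \cite{cvv2010}, which are precisely the ``GKW under $\P^*$ versus FS under $\P$'' comparison you set out to reconstruct. Your architecture (exhibit the candidate $\tilde A=G-\int_0^\cdot\phi^\H_r\,\ud S_r$, verify the defining properties, invoke uniqueness of \eqref{eq:fsdecomposition2}) is the right one, and you correctly locate the crux in the identification of the $(\bH,\P)$-compensator of $[G,S]$.

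The problem is that the two decisive verifications are announced rather than carried out, and one of them is replaced by an invalid shortcut. First, the identity ``$(\bH,\P)$-compensator of $[G,S]$ equals ${}^\bH\langle[G,S],\int_0^\cdot\alpha^\H_r\,\ud N_r\rangle$'' is the entire content of the cited results; your justification is a heuristic about the jumps of $L=\doleans{-\int\alpha^\F_u\,\ud M_u}$ ``projecting to $-\alpha^\H$''. But the Girsanov drift of an $\bH$-adapted $\P^*$-martingale must be computed against the $\bH$-optional projection $\tilde L_t=\esp{L_t|\H_t}$, and $\tilde L$ is \emph{not} in general $\doleans{-\int\alpha^\H_u\,\ud N_u}$: its martingale part may carry a component orthogonal to $N$, and one has to show that this component contributes nothing to $\langle G,\tilde L\rangle$ and $\langle[G,S],\tilde L\rangle$. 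That is exactly the work done in \cite{ccc2}, and your sketch leaves it open. Second, you dispose of the requirement that $\tilde A$ be an $(\bH,\P)$-martingale by saying it ``is guaranteed by the existence of \eqref{eq:fsdecomposition2}''. That is a non sequitur: existence of \emph{some} FS decomposition of ${}^o\xi$ does not imply that \emph{your} candidate is one. The martingale property must instead be derived from the orthogonality computation --- once one knows that the $\P$-drift of $G$ is $\int\alpha^\H_r\,\ud\langle G,N\rangle_r$ and that $\langle G,N\rangle=\int\phi^\H_r\,\ud\langle N\rangle_r$, the drift of $\tilde A$ cancels; without those identities the appeal to existence is circular. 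Finally, strong orthogonality to $N$ requires $\phi^\H=\ud\langle G,N\rangle/\ud\langle N\rangle$, whereas \eqref{phiH} has ${}^\bH\langle S\rangle$ in the denominator; reconciling the two amounts to checking that the factor $1+(\alpha^\H)^2\Delta\langle N\rangle$ appears in both numerator and denominator at the $\bH$-predictable jump times and cancels --- you flag this as delicate but do not resolve it. So the proposal is a plausible roadmap to the cited results, not yet a proof.
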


\begin{proof}
By Theorem \ref{thm1} we get that $\beta^\H_t=\widetilde \beta^\H_t$ for every $t \in [0,T]$. Then, by formula (4.6) in~\cite[Proposition 4.8]{ccc2} we get \eqref{eq:betaH}, where $H^\H$ is given by \eqref{eq:strategia_filtro1}. Finally, the characterization of $\phi^\H$ follows by Theorem 3.2 and Remarks on page 860 in~\cite{cvv2010}.
\end{proof}

\begin{corollary}\label{totally inaccessible jumps}
Under the hypotheses of Proposition \ref{casoJ}, assume that $S$ has only $(\bF, \P)$-totally inaccessible jump times, then
\begin{equation}\label{eq:strategia_filtro}
\beta^\H_t= H^\H_t+\phi^\H_t,\quad \P-\mbox{a.s.} \quad t \in [0,T].
\end{equation}
with $H^\H$ given by \eqref{eq:strategia_filtro1} and
\begin{equation}\label{phiH2}
\quad \phi^\H_t = \frac{ \ud {}^\bH\langle [G, S],  \int_0^\cdot \alpha^\H_r \ud N_r \rangle_t }
{ \ud {}^\bH\langle N \rangle_t} , \quad \P-\mbox{a.s.} \quad t \in [0,T],
\end{equation}
where the sharp brackets are computed under $\P$.
\end{corollary}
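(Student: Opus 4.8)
\emph{Proof proposal.} The plan is to observe that the statement of the corollary coincides with that of Proposition \ref{casoJ} except that the denominator of $\phi^\H$ is $\ud\,{}^\bH\langle N\rangle_t$ in \eqref{phiH2} instead of $\ud\,{}^\bH\langle S\rangle_t$ in \eqref{phiH}, while the numerator $\ud\,{}^\bH\langle [G,S],\int_0^\cdot\alpha^\H_r\ud N_r\rangle_t$ and the term $H^\H$ are unchanged. Thus the whole argument reduces to proving the single identity ${}^\bH\langle S\rangle_t = {}^\bH\langle N\rangle_t = \langle N\rangle_t$, $\P$-a.s. for all $t\in[0,T]$, under the extra hypothesis that all jump times of $S$ are $(\bF,\P)$-totally inaccessible; once this is available one simply substitutes it into \eqref{phiH} and combines with \eqref{eq:betaH} to obtain \eqref{eq:strategia_filtro} together with \eqref{phiH2}.

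The core of the proof is to show that the $\bH$-predictable finite variation component $R=B^\bH$ in the decomposition \eqref{eq:Semi} is continuous. First I would note that, since $\H_t\subseteq\F_t$ for every $t$, the $\bH$-predictable $\sigma$-field is contained in the $\bF$-predictable one, so every $\bH$-predictable stopping time $\tau$ is in particular $\bF$-predictable. By the totally inaccessible jumps hypothesis we then have $\Delta S_\tau=0$ $\P$-a.s. on $\{\tau<\infty\}$ for every $\bH$-predictable $\tau$; that is, $S$ is $\bF$-quasi-left-continuous and, a fortiori, $\bH$-quasi-left-continuous. By the discussion in Section \ref{sec:setting} this makes $U^\bH$ continuous and forces $\nu^\bH(\{t\},\ud z)\equiv 0$ on the graphs of $\bH$-predictable times; recalling the identity $\Delta B^\bH_t=\int z\,\nu^\bH(\{t\},\ud z)$ from Section \ref{sec:setting}, we conclude $\Delta B^\bH_t=0$, i.e. $R$ is continuous.

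Finally I would transfer the continuity of $R$ to the brackets. Writing $S=S_0+N+R$ with $R$ continuous of finite variation, the jump relations $[N,R]_t=\sum_{s\le t}\Delta N_s\Delta R_s=0$ and $[R]_t=\sum_{s\le t}(\Delta R_s)^2=0$ give $[S]=[N+R]=[N]$; taking $\bH$-predictable compensators yields ${}^\bH\langle S\rangle={}^\bH\langle N\rangle=\langle N\rangle$, which is exactly the identity needed above. I expect the only genuinely delicate point to be the continuity of $R$ in the second paragraph, since it requires passing from the $\bF$-total inaccessibility of the jump times to the vanishing of the $\bH$-dual predictable projection $\nu^\bH$ on predictable graphs; the remaining manipulations are direct substitutions into Proposition \ref{casoJ}.
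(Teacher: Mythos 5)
Your proposal is correct and follows essentially the same route as the paper: both reduce the corollary to showing that the $\bH$-predictable finite variation part $R$ of $S$ is continuous, so that ${}^\bH\langle S\rangle={}^\bH\langle S_0+N+R\rangle={}^\bH\langle N\rangle$ and \eqref{phiH} collapses to \eqref{phiH2}. The only difference is that you supply the details (inclusion of the $\bH$-predictable $\sigma$-field in the $\bF$-predictable one, vanishing of $\nu^\bH$ on predictable graphs, hence $\Delta B^\bH_t=\int z\,\nu^\bH(\{t\},\ud z)=0$) that the paper compresses into the single assertion that total inaccessibility of the jumps of $S$ under $\bF$ forces the finite variation parts in both decompositions to be continuous.
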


\begin{proof}
Note that the $\bH$-predictable quadratic variation of $S$ is given by ${}^\bH\langle S \rangle={}^\bH\langle N\rangle+ \sum_{s\leq t}(\Delta R_s)^2$. Then, the result follows by Proposition \ref{casoJ} and the observation that, if S has $(\bF, \P)$-totally inaccessible jump times, then the finite variation parts in the semimartingale decompositions of $S$ with respect to both filtrations $\bF$ and $\bH$ are continuous. This implies that ${}^\bH\langle N\rangle={}^\bH\langle S \rangle$ and the expression of \eqref{phiH} reduces to $\eqref{phiH2}$.
\end{proof}

Note that representation \eqref{eq:strategia_filtro} shows how the knowledge about decomposition \eqref{GKWV} of $V^\H$ is an essential tool to compute $\beta^\H$. In the next section we discuss an application in a Markovian framework.

\section{Application to partially observable Markov models}

We consider a partially observable Markovian model, where the dynamics of the semimartingale $S$ is affected by  an unobservable external factor, modeled by  a Markov process denoted by $X$ and such that  the pair $(X, S)$ turns out to be an $(\bF,\P)$-Markov process.

Here we assume that the available information coincides with the natural filtration of the semimartingale $S$, precisely,  $\bH=\bF^S$. Then, we consider a random variable $\xi \in L^2(\F_T,\P)$  of the form
\begin{equation}\label{eq:claim}
\xi=  H(T, X_T, S_T),
\end{equation}
where $H(t,s,x)$ is a given deterministic function.

The goal of this section is to characterize the integrand in the F\"ollmer-Schweizer decomposition under partial information of the random variable $\xi$ given in \eqref{eq:claim}. To this aim, all the hypotheses made in the previous sections are assumed to be fulfilled.

In order to apply Proposition \ref{casoJ} to compute $\beta^\H$, it is essential to provide a representation for the projection of $\xi$ with respect to $\H_T$, i.e.
\begin{equation}\label{eq:oxi}
{}^o\xi = \bE[ H(T,S_T,X_T)  | \H_T].
\end{equation}
Under the hypothesis that $\H_t=\F^S_t$, for every $t \in [0,T]$, the projection ${}^o\xi$ can be written in terms of the filter $\pi$ with respect to $\P$, $\pi(f)=\left\{\pi_t(f), \ t \in [0,T]\right\}$, defined by setting
\begin{equation} \label{def:filtro}\pi_t(f) : =  \mathbb{E}[ f(t,X_t,S_t)  | \F^S_t ] = \int_\R f(t,x,S_t) \pi_t(\ud x), \ \forall t \in [0,T],\end{equation}
for any measurable function $f(t,x,s)$ such that $\mathbb{E} |f(t,X_t,S_t)|< \infty$, for every $t\in [0,T]$. The filter is a probability measure valued  process, which provides the conditional law of the stochastic factor $X$ given the information flow.
In particular we get that
\begin{equation}\label{eq:claimp}
{}^o\xi = \bE[ H(T,S_T,X_T)  | \H_T] = \pi_T(H).
\end{equation}
We denote by ${\mathcal P}(\R)$ the space of probability measures on $\R$ with the weak topology and denote by $p$ its elements.

In the sequel we assume the vector $(X,S, \pi)$ to be an $(\bF, \P^*)$-Markov process and we denote by $\L^*$ its generator.

\begin{remark}
This is a natural assumption satisfied by a large class of models where the change of probability measure, defining the minimal martingale measure $\P^*$ is Markovian. We  provide an example in Section \ref{sec:jumpdiff_model}.
\end{remark}

By~\cite[Chapter 4, Proposition 1.7]{ek86} we get that for every function $f(t,x,s,p)$, with $(t,x,s,p) \in [0,T] \times \R^2 \times {\mathcal P}(\R)$, in the domain of the operator $\L^*$, $D(\L^*)$,  the process $m^f=\{m^f_t, \ t \in [0,T]\}$ given by
\begin{equation} \label{GEN}
m^f_t=f(t,X_t, S_t, \pi_t) -\int_0^t\L^{*} f(u,X_u, S_u, \pi_u) \ud u,\quad t \in [0,T]
\end{equation}
is an $(\bF, \P^*)$-martingale. This leads to the following result.
\begin{proposition}\label{VH}
Let the vector $(X,S, \pi)$ be  an $(\bF, \P^*)$-Markov process.
Then, the   process $V^\H$ given in \eqref{def:vh} admits the following representation
\begin{equation}\label{eq:vh}
V_t^\H = \bE^{\P^*}\left[ g(t, X_t, S_t, \pi_t)   \Big| \H_t\right] \quad t \in [0,T],
\end{equation}
where $g(t, x, s, p)$ is a measurable function on  $[0,T] \times \R^2  \times {\mathcal P}(\R)$ such that
 \begin{align} \label{g}
 g(t, X_t, S_t, \pi_t) = \bE^{\P^*}\left[ \pi_T(H)  \Big| \F_t\right].
 \end{align}
\end{proposition}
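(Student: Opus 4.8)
\textbf{Proof plan for Proposition \ref{VH}.}

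The plan is to exploit the Markov property of the vector $(X,S,\pi)$ under $\P^*$ together with the filtering identity \eqref{eq:claimp} to rewrite $V^\H$ as a conditional expectation of a function of the current Markov state. First I would recall the definition \eqref{def:vh} of $V^\H$, namely $V^\H_t = \bE^{\P^*}[{}^o\xi \mid \H_t]$, and substitute the identity \eqref{eq:claimp}, which tells us that ${}^o\xi = \pi_T(H)$. This reduces the problem to showing that the inner object $\bE^{\P^*}[\pi_T(H)\mid \F_t]$ admits a functional representation of the form $g(t,X_t,S_t,\pi_t)$ for a suitable measurable function $g$ on $[0,T]\times\R^2\times\cP(\R)$.

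The key step is the construction of $g$ satisfying \eqref{g}. Since $\pi_T(H)$ is a measurable function of the terminal Markov state $(X_T,S_T,\pi_T)$---indeed $\pi_T(H)=\int_\R H(T,x,S_T)\pi_T(\ud x)$ is itself a function of $(S_T,\pi_T)$---and since $(X,S,\pi)$ is assumed to be an $(\bF,\P^*)$-Markov process, the Markov property yields that the $\F_t$-conditional expectation of this terminal functional depends on $\F_t$ only through the current value of the state $(X_t,S_t,\pi_t)$. This is precisely what allows us to define
\[
g(t,x,s,p):=\bE^{\P^*}\left[\pi_T(H)\mid (X_t,S_t,\pi_t)=(x,s,p)\right],
\]
so that \eqref{g} holds $\P^*$-a.s. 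The integrability of $g(t,X_t,S_t,\pi_t)$, and hence its square-integrability required for $V^\H$ to be well defined, follows from Assumption \ref{ass:pstar}, which guarantees ${}^o\xi\in L^2(\H_T,\P^*)$, combined with the conditional Jensen inequality applied to the tower of conditioning.

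Finally I would derive representation \eqref{eq:vh} from \eqref{g} by the tower property. Writing $V^\H_t=\bE^{\P^*}[\pi_T(H)\mid \H_t]$ and inserting the intermediate conditioning on $\F_t\supseteq\H_t$, we obtain
\[
V^\H_t=\bE^{\P^*}\left[\bE^{\P^*}[\pi_T(H)\mid \F_t]\,\Big|\,\H_t\right]
=\bE^{\P^*}\left[g(t,X_t,S_t,\pi_t)\,\Big|\,\H_t\right],
\]
which is exactly \eqref{eq:vh}. I expect the main obstacle to be the rigorous justification that the $(\bF,\P^*)$-Markov property delivers a jointly measurable function $g$ in the infinite-dimensional variable $p\in\cP(\R)$; this requires care with the weak topology on $\cP(\R)$ and an appeal to a regular-conditional-distribution or measurable-selection argument, whereas the tower-property manipulation and the integrability bookkeeping are routine. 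The martingale characterization \eqref{GEN}, though stated just before the proposition, is not strictly needed for the representation itself but would become relevant if one wished to characterize $g$ as the solution of an associated evolution equation driven by $\L^*$.
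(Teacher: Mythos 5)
Your proposal is correct and follows essentially the same route as the paper: substitute ${}^o\xi=\pi_T(H)$, note that $\pi_T(H)$ is a function of the terminal state so the $(\bF,\P^*)$-Markov property of $(X,S,\pi)$ yields the functional representation $g(t,X_t,S_t,\pi_t)$ of $\bE^{\P^*}[\pi_T(H)\mid\F_t]$, and conclude by the tower property with $\H_t\subseteq\F_t$. Your additional remarks on measurability of $g$ in the $\cP(\R)$-variable and on the role of \eqref{GEN} are sensible but go beyond what the paper itself records.
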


\begin{proof}
Since $ \pi_T(H)= \int_\R H(T,x,S_T) \pi_T(\ud x) $ depends on $\omega$ through $(S_T(\omega), \pi_T(\omega))$, then, by the $(\bF, \P^*)$-Markov property  of the triplet $(X,S, \pi)$, we get
\begin{align*}\label{V1}
V_t^\H & = \bE^{\P^*}[ {}^o\xi  | \H_t]  =   \bE^{\P^*}[  \pi_T(H)  | \H_t]  =   \bE^{\P^*}[   \bE^{\P^*}[  \pi_T(H)  | \F_t]   | \H_t] =  \bE^{\P^*}\left[ g(t, X_t, S_t, \pi_t)   \Big| \H_t\right]
\end{align*}
where $g(t, x, s, p)$, is a measurable function of its arguments $(t, x, s, p) \in [0,T] \times \R^2  \times {\mathcal P}(\R)$, such that \eqref{g} is fulfilled.
\end{proof}
The next lemma gives a representation of the function $g$ as a solution to a problem with final condition.
\begin{lemma}\label{lemma:caratterizzazioneG}
Let $\widetilde{g}(t, x, s, p)\in D(\L^*)$  such that
\begin{equation}\label{eq:problema}
\left\{
\begin{aligned}
\L^* \widetilde{g}(t, x,s, p) &=0, \quad (t,x,s,p) \in [0,T)\times \R^2\times \mathcal{P}(\R)\\
\widetilde{g}(T, x, s, p)&= p(H) = \int_{\R} H(T,y,s) p(\ud y), \quad (x,s,p)\in \R^2\times \mathcal{P}(\R).
\end{aligned}
\right.
\end{equation}
Then, $\widetilde{g}(t, X_t,S_t, \pi_t)= g(t,X_t,S_t, \pi_t)$ $\P$-a.s., for every $t \in [0,T]$.
\end{lemma}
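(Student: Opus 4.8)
The plan is to read the claim as a Feynman--Kac type verification result: the candidate $\widetilde g$, being a solution of the terminal value problem \eqref{eq:problema}, should produce---once evaluated along the path of $(X,S,\pi)$---an $(\bF,\P^*)$-martingale whose terminal value is exactly $\pi_T(H)={}^o\xi$, and this martingale must then coincide with the conditional expectation defining $g$ in \eqref{g}. The whole argument rests on the martingale characterization \eqref{GEN} of the generator $\L^*$ of the $(\bF,\P^*)$-Markov process $(X,S,\pi)$.

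First I would invoke \eqref{GEN} with $f=\widetilde g$: since $\widetilde g\in D(\L^*)$, the process
\[
m^{\widetilde g}_t=\widetilde g(t,X_t,S_t,\pi_t)-\int_0^t\L^*\widetilde g(u,X_u,S_u,\pi_u)\,\ud u,\quad t\in[0,T],
\]
is an $(\bF,\P^*)$-martingale. Next I would use the first equation in \eqref{eq:problema}, namely $\L^*\widetilde g=0$ on $[0,T)\times\R^2\times\mathcal{P}(\R)$: because the exceptional time $\{T\}$ has zero Lebesgue measure, the integral $\int_0^t\L^*\widetilde g(u,X_u,S_u,\pi_u)\,\ud u$ vanishes for every $t\in[0,T]$. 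Hence $\widetilde g(t,X_t,S_t,\pi_t)=m^{\widetilde g}_t$ is itself an $(\bF,\P^*)$-martingale, which yields
\[
\widetilde g(t,X_t,S_t,\pi_t)=\bE^{\P^*}\!\left[\widetilde g(T,X_T,S_T,\pi_T)\,\big|\,\F_t\right],\quad t\in[0,T].
\]

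Then I would apply the terminal condition in \eqref{eq:problema}: evaluating $\widetilde g(T,x,s,p)=\int_{\R}H(T,y,s)\,p(\ud y)$ at $(x,s,p)=(X_T,S_T,\pi_T)$ gives $\widetilde g(T,X_T,S_T,\pi_T)=\int_{\R}H(T,y,S_T)\,\pi_T(\ud y)=\pi_T(H)$, which by \eqref{eq:claimp} equals ${}^o\xi$. Substituting back shows $\widetilde g(t,X_t,S_t,\pi_t)=\bE^{\P^*}[\pi_T(H)\,|\,\F_t]$, and this is precisely the defining relation \eqref{g} for $g$. Therefore $\widetilde g(t,X_t,S_t,\pi_t)=g(t,X_t,S_t,\pi_t)$ $\P^*$-a.s., and since $\P^*\sim\P$ the identity holds $\P$-a.s. for every $t\in[0,T]$.

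The main obstacle is measure-theoretic bookkeeping rather than any deep structural fact. One must justify that membership $\widetilde g\in D(\L^*)$ supplies enough integrability for \eqref{GEN} to deliver a genuine martingale on the \emph{closed} interval $[0,T]$, so that the conditional expectation and the terminal value are both well defined; this is where Assumption \ref{ass:pstar}, guaranteeing $\pi_T(H)={}^o\xi\in L^2(\H_T,\P^*)$, is used implicitly. One must also handle carefully the fact that the equation $\L^*\widetilde g=0$ is only imposed on the half-open interval $[0,T)$, and this is exactly what the null-measure argument above resolves.
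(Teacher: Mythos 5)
Your proposal is correct and follows essentially the same route as the paper: apply the martingale characterization \eqref{GEN} to $\widetilde g$, use $\L^*\widetilde g=0$ to conclude that $\widetilde g(t,X_t,S_t,\pi_t)$ is an $(\bF,\P^*)$-martingale with terminal value $\pi_T(H)$, and identify it with $\bE^{\P^*}[\pi_T(H)\,|\,\F_t]=g(t,X_t,S_t,\pi_t)$. Your additional remarks on the null set $\{T\}$ and on passing from $\P^*$-a.s.\ to $\P$-a.s.\ via equivalence of the measures are correct refinements that the paper leaves implicit.
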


\begin{proof}
Let $\widetilde{g}(t, x, s, p) \in D(\L^*)$ be the solution of \eqref{eq:problema}. Then, by \eqref{GEN},  we get that the process $\ds \left\{\widetilde{g}(t, X_t, S_t, \pi_t), \ t \in [0,T]\right\}$
is an $(\bF, \P^*)$-martingale with final value $\widetilde{g}(T, X_T, S_T, \pi_T)=\pi_T(H)$. As a consequence
$\widetilde{g}(t, X_t, S_t, \pi_t)=\bE^{\P^*}[ \pi_T(H) | \F_t]$, for every $t \in [0,T]$, which implies the thesis.
\end{proof}

In general, the vector process $(X,S, \pi)$ takes values in the infinite dimensional space $\R^2 \times {\mathcal P}(\R)$. However, if $X$  assumes finitely many values, then the filter $\pi$ is also finite, and this reduces an infinite dimensional state problem to a finite dimensional one.

Precisely, we assume that $X$ takes values in a set $\mathcal D=\{x_1,\dots, x_d\}$, with $x_i\in \R$ for every $i=1,\dots,d$. Then, for any function $f(t,x,s)$ we can write
\begin{equation} \label{pi_i}
\pi_t(f) = \sum_{i=1}^{d} \bE[ f(t, X_t, S_t) \mathbf{1}_{\{X_t=i\}} |  \H_t] = \sum_{i=1}^{d} f(t, x_i, S_t) \pi_t(f_i),
\end{equation}
where $f_i(x)=\I_{\{x=x_i\}}$  and $ \pi_t(f_i)= \P( X_t=i | \H_t )$. Then, the filter is completely characterized by the conditional probabilities $\pi_t(f_i)$, $i=1,\dots,d$ for every $t\in[0,T]$. Denoting the conditional probabilities vector by $\underline{\pi}:=\{\underline{\pi_t}, \ t \in [0,T]\}$ with $\underline{\pi_t}:= (\pi_t(f_1), \dots, \pi_t(f_d))$, we get that the process $(X,S, \underline{\pi})$ takes values in the finite dimensional space $\R^2 \times [0,1]^d$.

In this framework, since $(X,S, \underline{\pi})$ is an $(\bF, \P^*)$-Markov process with generator $\L^*$, relationship \eqref{eq:vh} in Proposition \ref{VH} can be written as
$$
V_t^\H = \bE^{\P^*}\left[ g(t, X_t, S_t, \underline{\pi_t})   \Big| \H_t\right] \quad t \in [0,T],
$$
where, $g(t, x, s, \underline{p})$ now denotes a measurable function on  $[0,T] \times \R^2 \times [0,1]^{d}$ such that
\begin{equation}
 g(t, X_t, S_t,  \underline{\pi_t}) = \bE^{\P^*}\left[ \pi_T(H)  \Big| \F_t\right] =  \bE^{\P^*}\left[ \sum_{i=1}^{d} H(T,  x_i, S_T) \pi_T(f_i) \Big| \F_t\right],
 \end{equation}
for every $t \in [0,T]$. Similarly to Lemma \ref{lemma:caratterizzazioneG}, we can characterize the function $g(t,x,s, \underline{p})$ as follows.

\begin{lemma}\label{lemma:caratterizzazioneG1}
Let $\widetilde{g}(t, x, s, \underline{p})\in D(\L^*)$  such that
\begin{equation}\label{eq:problema1}
\left\{
\begin{aligned}
\L^* \widetilde{g}(t, x,s, \underline{p}) &=0, \quad (t,x,s,\underline{p}) \in [0,T)\times \R^2\times [0,1]^d \\
\widetilde{g}(T, x, s, \underline{p})&= \sum_{i=1}^{d} p_i H(T, x_i, s), \quad (x,s,\underline{p}) \in  \R^2\times [0,1]^d.
\end{aligned}
\right.
\end{equation}
Then, $\widetilde{g}(t, X_t,S_t,  \underline{\pi_t})= g(t,X_t,S_t, \underline{\pi_t})$ $\P$-a.s., for every $t \in [0,T]$.
\end{lemma}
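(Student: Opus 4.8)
The plan is to recognize that this lemma is the exact finite-dimensional analogue of Lemma \ref{lemma:caratterizzazioneG}, so the proof should simply specialize the earlier argument to the case where $X$ takes values in the finite set $\mathcal D=\{x_1,\dots,x_d\}$. The whole engine is the martingale identity \eqref{GEN}: for any function in $D(\L^*)$, subtracting the time-integral of $\L^*$ applied to it produces an $(\bF,\P^*)$-martingale. The key observation that makes the final condition take the stated form is relationship \eqref{pi_i}, which shows that in the finite-state setting $\pi_T(H)=\sum_{i=1}^d H(T,x_i,S_T)\pi_T(f_i)$, i.e.\ the terminal payoff depends on $\omega$ only through $S_T$ and the finite-dimensional vector $\underline{\pi_T}$.

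First I would take $\widetilde g(t,x,s,\underline p)\in D(\L^*)$ solving the problem \eqref{eq:problema1} and apply \eqref{GEN} to the process $\{\widetilde g(t,X_t,S_t,\underline{\pi_t}),\ t\in[0,T]\}$. Because $\L^*\widetilde g=0$ on $[0,T)\times\R^2\times[0,1]^d$, the time-integral term in \eqref{GEN} vanishes, so this process coincides with the martingale $m^{\widetilde g}$ and is therefore itself an $(\bF,\P^*)$-martingale. Next I would evaluate its terminal value: by the boundary condition in \eqref{eq:problema1},
\begin{equation*}
\widetilde g(T,X_T,S_T,\underline{\pi_T})=\sum_{i=1}^d \pi_T(f_i)\,H(T,x_i,S_T)=\pi_T(H),
\end{equation*}
where the last equality is exactly \eqref{pi_i} evaluated at $t=T$ with $f=H$.

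Having identified both that the process is a martingale and that its terminal value is $\pi_T(H)$, I would conclude by the martingale property that
\begin{equation*}
\widetilde g(t,X_t,S_t,\underline{\pi_t})=\bE^{\P^*}\!\left[\pi_T(H)\,\big|\,\F_t\right],\qquad t\in[0,T].
\end{equation*}
Comparing this with the defining relation \eqref{g} (in its finite-dimensional form) for $g$, which characterizes $g(t,X_t,S_t,\underline{\pi_t})$ as precisely this conditional expectation, yields $\widetilde g(t,X_t,S_t,\underline{\pi_t})=g(t,X_t,S_t,\underline{\pi_t})$ $\P$-a.s.\ for every $t\in[0,T]$, which is the thesis. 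The argument is essentially a transcription of the proof of Lemma \ref{lemma:caratterizzazioneG}, so I do not expect any genuine obstacle; the only point requiring care is justifying the terminal identity via \eqref{pi_i} and ensuring the integrability needed to invoke \eqref{GEN}, both of which are already guaranteed by Assumption \ref{ass:pstar} and the finiteness of $\mathcal D$.
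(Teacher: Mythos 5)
Your proposal is correct and follows exactly the route the paper takes: the paper's proof of this lemma simply states that it "follows from the same lines" as Lemma \ref{lemma:caratterizzazioneG} combined with equation \eqref{pi_i}, which is precisely your argument of applying \eqref{GEN} to $\widetilde g(t,X_t,S_t,\underline{\pi_t})$, using $\L^*\widetilde g=0$ to get the martingale property, and identifying the terminal value as $\pi_T(H)$ via \eqref{pi_i}. No discrepancies to report.
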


\begin{proof}
The proof follows from the same lines of those of Lemma \ref{lemma:caratterizzazioneG} and equation \eqref{pi_i}, which characterizes the filter for a state process assuming finitely many values.
\end{proof}

\subsection{A partially observable  jump-diffusion model}\label{sec:jumpdiff_model}
In this section we discuss a partially observable model where
  $S$ is described by a geometric jump diffusion process, with drift and jump characteristics depending on an unobservable stochastic factor $X$,  modeled by a pure jump Markov process having common jump times with $S$ and taking values in the finite space ${\mathcal D} = \{x_1,\dots, x_d\}$, with $x_i \in \R$ for every $i=1,\dots,d$. Precisely, we consider the following system of SDEs
\begin{equation}\label{eq:sistema}
\left\{
\begin{aligned}
\ud X_t&=  \int_Z K_0(\zeta; t, X_{t^-}) \N(\ud t, \ud \zeta), \quad X_0= x_0 \in \mathcal{D}, \\
\ud S_t&=S_{t^-}\left(\mu_1(t, X_t, S_t)\ud t+\sigma_1(t,S_t)\ud W^1_t+\int_Z K_1(\zeta;t, X_{t^-}, S_{t^-}) \N(\ud t, \ud \zeta)\right),\quad
S_0=s_0>0,
\end{aligned}
\right.
\end{equation}
where $\N(\ud t, \ud \zeta)$ is an  $(\bF, \P)$-Poisson random measure with finite intensity  $\eta(\ud \zeta)\ud t$ on a measurable and separable space $(Z, \mathcal Z)$,  $W^1:=\{W^1_t, t \in [0,T]\}$ is an $(\bF,\P)$-Brownian motion independent of $\N(\ud t, \ud \zeta)$, the coefficients $\mu_1(t,x,s)$, $ \sigma_1(t,s)>0$, $K_0(\zeta;t,x)$ and $K_1(\zeta; t,x,s)$ are
$\R$-valued measurable functions of their arguments such that a unique strong solution for the system \eqref{eq:sistema} exists, see for
instance~\cite{OS}.

Here, $K_0(\zeta;t,x)$ takes values in the set $\mathcal K_0 :=\{ k_{ij} = x_i - x_j:  i \neq j, \ i,j =1,...d \}$.

Denote by $\widetilde \N(\ud t, \ud \zeta)$ the $(\bF, \P)$-compensated random measure given by
\begin{equation*} \label{def:cm}
\widetilde \N(\ud t, \ud \zeta)=\N(\ud t, \ud\zeta)-\eta(\ud \zeta)\ud t.
\end{equation*}
We recall the definition of the integer-valued random measure associated to the jumps of $S$ given by
$$
m(\ud t,\ud z) = \sum_{s: \Delta S_s \neq 0} \delta_{(s, \Delta S_s)}(\ud t,\ud z).
$$
For every $t \in [0,T]$, set $D_t:=\{\zeta \in Z: K_1(\zeta;t, X_{t^-}, S_{t^-})\neq 0\}$ and assume $\esp{\int_0^T \eta(D_t) \ud t} < \infty$. Then the  $(\bF, \P)$-predictable dual projection of  $m(\ud t,\ud z)$ is  given by \[\nu^\bF(\ud t,\ud z) = \nu^\bF_t(\ud z) \ud t, \]
where $\nu^\bF_t(\mathcal A) = \eta(D_t^{{\mathcal A}})$, $D^{\mathcal A}_t:=\{\zeta \in Z: K_1(\zeta;t, X_{t^-}, S_{t^-})\in  {\mathcal A} \setminus \{0\} \}$, for any ${\mathcal A} \in {\mathcal B}(\R)$ (see e.g. \cite{cg06,cco1} for more details).

Notice that $\nu^\bF_t(\ud z)$ depends on $\omega$ through $(X_{t^-}(\omega), S_{t^-}(\omega))$, i.e. $\nu^\bF_t(\ud z)=\nu^\bF(t,X_{t^-}, S_{t^-}, \ud z)$. Furthermore, $\nu^\bF_t(\mathbb{R})=\{\nu^\bF_t(\mathbb{R}) = \eta(D_t), \ t \in [0,T]\}$  provides the $(\bF, \P)$-intensity of the point
process $\{m((0,t]\times \R), \ t \in [0,T]\}$, where $m((0,t]\times \R)$ gives the jumps number of $S$ up to time $t$.

Assume that the intensity is strictly
positive, i.e. $\eta(D_t)>0 $ $\P$-a.s. for every $t \in [0,T]$. Moreover, we also assume that
\begin{equation} \label{integrabP1}
\esp{\int_0^T  \left[|\mu_1(t, X_t, S_t)|+\sigma_1^2(t, S_t)+ \eta(D_t)+ \int_Z |K_1(\zeta;t, X_t, S_t)|\eta(\ud \zeta)\right]\ud t} < \infty,
\end{equation}
\begin{gather}
\esp{ \int_0^T  \eta(D^0_t)+ \int_Z |K_0(\zeta;t, X_t)|\eta(\ud \zeta)%\right]
\ud t} <
\infty,\label{integrabP}
\end{gather}
where $D^0_t\!:=\!\{\zeta\in Z: K_0(\zeta; t, X_{t^-} )\neq 0\}$ for every $t \in [0,T]$.

\begin{proposition}\label{prop:generatore}
Under \eqref{integrabP1} and \eqref{integrabP}, the pair $(X,S)$ is an $(\bF, \P)$-Markov process with generator $\L^{X,S}$ defined by
\begin{equation}\label{generatore}
\L^{X,S} f(t,x,s) =\frac{\partial f}{\partial t}+ \mu_1(t,x,s) s \frac{\partial f}{\partial s} +
\frac{1}{ 2} \sigma_1^2(t,s) s^2  \frac{\partial^2 f}{\partial s^2} +  \int_Z \Delta f(\zeta;t,x,s)\eta(\ud \zeta),
\end{equation}
where
\begin{equation} \label{Salti}
\Delta f (\zeta;t,x,s):= f \big (t,x+K_0(\zeta;t,x), s( 1 +K_1(\zeta; t,x, s)) \big)-f(t,x,s),
\end{equation}
for every function $f(t,x,s)$ bounded and measurable with respect to $x$ and $C^{1,2}$ with respect to $(t,s)$.
Moreover, the following semimartingale decomposition holds
\[
f(t,X_t, S_t)=f(0, x_0,s_0)+\int_0^t \L^{X,S} f(u,X_u, S_u) \ud u + M^f_t,
\]
where $M^f=\{M_t^f, \ t \in [0,T]\}$ is the $(\bF, \P)$-martingale given by

\begin{equation}\label{eq:martingale}
\ud M^f_t= \frac{\partial f}{\partial s} S_{t}\sigma_1(t,S_t)\ud W^1_t + \int_Z \Delta f(\zeta;t,X_{t^-},S_{t^-}) \left(\N(\ud t, \ud \zeta)- \eta(\ud \zeta) \ud t\right).
\end{equation}
\end{proposition}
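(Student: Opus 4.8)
The plan is to establish both assertions simultaneously by applying the Itô formula for jump-diffusions to $f(t,X_t,S_t)$ and then reading off the drift and the martingale parts. The crucial structural observation is that, by \eqref{eq:sistema}, $X$ is a \emph{pure-jump} process whose continuous martingale part vanishes, while $S$ is a jump-diffusion; consequently the Itô expansion contains no first-order derivative of $f$ in the variable $x$ and no cross term, which is exactly why mere measurability and boundedness of $f$ in $x$, together with $C^{1,2}$-regularity in $(t,s)$, suffices. A second observation is that both $X$ and $S$ can only jump at the jump times of the Poisson random measure $\N$: at such a time carrying mark $\zeta$ one has $\Delta X=K_0(\zeta;t,X_{t^-})$ and $\Delta S=S_{t^-}K_1(\zeta;t,X_{t^-},S_{t^-})$, so that the induced jump of $f$ is precisely $\Delta f(\zeta;t,X_{t^-},S_{t^-})$ as defined in \eqref{Salti}.

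First I would isolate the continuous contribution. The continuous part of $S$ has drift $S_u\mu_1(u,X_u,S_u)\ud u$ and continuous martingale part $S_u\sigma_1(u,S_u)\ud W^1_u$, with $\ud\langle S^c\rangle_u=S_u^2\sigma_1^2(u,S_u)\ud u$; Itô's formula then produces the terms $\frac{\partial f}{\partial t}$, $\mu_1 s\frac{\partial f}{\partial s}$ and $\frac12\sigma_1^2 s^2\frac{\partial^2 f}{\partial s^2}$ of \eqref{generatore} together with the continuous integral $\int_0^t\frac{\partial f}{\partial s}S_u\sigma_1(u,S_u)\ud W^1_u$. The jumps are collected as $\sum_{u\le t}\Delta f=\int_0^t\int_Z\Delta f(\zeta;u,X_{u^-},S_{u^-})\N(\ud u,\ud\zeta)$; compensating against the intensity $\eta(\ud\zeta)\ud u$ splits this into the integral against $\widetilde\N$ and the extra drift $\int_Z\Delta f(\zeta;u,X_{u^-},S_{u^-})\eta(\ud\zeta)\ud u$. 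Grouping the four absolutely continuous terms yields exactly $\int_0^t\L^{X,S}f(u,X_u,S_u)\ud u$, and the two stochastic integrals are precisely $M^f_t$ in \eqref{eq:martingale}.

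Next I would verify that $M^f$ is a genuine $(\bF,\P)$-martingale and not merely a local one, which is where the standing integrability hypotheses enter. For the Brownian integral it suffices to control $\esp{\int_0^T(\frac{\partial f}{\partial s})^2 S_u^2\sigma_1^2(u,S_u)\ud u}$, and for the compensated jump integral to control $\esp{\int_0^T\int_Z|\Delta f(\zeta;u,X_{u^-},S_{u^-})|\eta(\ud\zeta)\ud u}$. Since $f$ is bounded in $x$ (equivalently $X$ ranges in the finite set $\mathcal D$) and $K_0$ takes values in $\mathcal K_0$, the jump term is dominated using \eqref{integrabP1} and \eqref{integrabP}, while the diffusive term is controlled by the moment bound on $\sigma_1^2$ in \eqref{integrabP1} combined with the growth of $\frac{\partial f}{\partial s}$ on the domain of $\L^{X,S}$. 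Localising along a sequence of stopping times and passing to the limit by dominated convergence then upgrades the local martingale to a true martingale.

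Finally, the $(\bF,\P)$-Markov property follows because \eqref{eq:sistema} is a Markovian system: its coefficients depend on $(\omega,t)$ only through the current state $(t,X_{t^-},S_{t^-})$, and the driving noises $W^1$ and $\N$ have independent increments, so the unique strong solution assumed to exist (cf.~\cite{OS}) enjoys the flow property; equivalently, the martingale decomposition just obtained shows that $(X,S)$ solves the martingale problem for $\L^{X,S}$, whose well-posedness yields the Markov property. I expect the main obstacle to be the careful bookkeeping of the simultaneous jumps in the compensation step together with the verification of the true (rather than merely local) martingale property of $M^f$ under \eqref{integrabP1}--\eqref{integrabP}; the remaining identification of $\L^{X,S}$ is a routine consequence of Itô's formula.
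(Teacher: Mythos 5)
Your proposal is correct and follows essentially the same route as the paper's own proof: apply It\^{o}'s formula to $f(t,X_t,S_t)$, identify the absolutely continuous part with $\L^{X,S}$ and the stochastic integrals with $M^f$, use \eqref{integrabP1} and \eqref{integrabP} (together with boundedness of $f$ and of $\partial f/\partial s$) to upgrade $M^f$ from a local to a true martingale, and conclude the Markov property from the martingale problem for $\L^{X,S}$.
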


The proof is postponed to Appendix \ref{appendix:technical}.

For simplicity in the sequel we assume
\begin{equation}\label{ass:boundedness_2}
\mu_1(t, x, s)<c_1, \quad 0<c_2<\sigma_1(t, s)<c_3 \ \mbox{ and } \ K_1(\zeta;t, x, s)<c_4,
\end{equation}
for every $(t,x,s) \in [0,T]\times \R\times \R^+, \ \zeta \in Z$ and for some constants $c_1, c_2, c_3, c_4$.

We define the process $I=\{I_t,\ t \in [0,T]\}$ by setting
\begin{equation}\label{eq:innovation}
I_t:= W^1_t+\int_0^t \frac{\mu_1(u, X_u, S_u)-\pi_u(\mu_1)}{\sigma_1(u, S_u)}\ud u, \quad t \in [0,T],
\end{equation}
where the filter $\pi$ is defined in \eqref{def:filtro}. The process $I$ is an $(\bH, \P)$-Brownian motion called the {\em innovation process} (see e.g.~\cite{FKK} and~\cite{K} for more details).

Moreover, under the assumption that
$\H_t=\F^S_t $, for every $ t\in [0,T]$, the $(\bH,\P)$-predictable dual projection of the integer-valued random measure $m(\ud t,\ud z)$ is given by $\nu_t^\bH(\ud z) \ud t$, and the following relationship holds
\[
\nu_t^{\bH}(\ud z)\ud t=\pi_{t^-}(\nu^{\bF}(\ud z))\ud t,
\]
thanks to~\cite[Proposition 2.2]{c06}.

Following the same argument of \cite[Theorem 3.1]{cco1} we get that the filtering equation with respect to  $\P$ in the jump-diffusion model is given by the following Kushner-Stratonovich equation,
\begin{equation*}
\pi_t (f) = f(0,x_0,s_0) + \int_0^t \pi_s( \L^{X,S} f) \ud s + \int_0^t h_s(f)  \ud I_s + \int_0^t \int_\R w^f(s,z) (m(\ud s, \ud
z)-\nu_s^{\bH}(\ud z)\ud s), \quad t \in [0,T],
\end{equation*}
for every function $f(t,x,s)$ bounded and measurable with respect to $x$ and $C^{1,2}$ with respect to $(t,s)$,  where
\begin{equation} \label{eq:h_jumpdiff1}
 h_t(f):=  \frac{\pi_t(\mu_1 f) - \pi_t(\mu_1) \pi_t(f)} {\sigma_1(t,S_t)} %\right)
 + S_t \sigma_1(t,S_t) \pi_t \left(\frac{\partial f }{\partial s}\right),
 \end{equation}
\begin{equation}\label{eq:w_jumpdiff1}
w^f(t,z):= \frac{\ud \pi_{t^-} (f \nu^{\bF})}{\ud \nu^{\bH}_t} (z) - \pi_{t^-}( f) +
  \frac{ \ud \pi_{t^-} (\overline{\L} f)}{\ud \nu^{\bH}_{t}} (z),
\end{equation}
the generator $\L^{X,S}$ is given in  \eqref{generatore} and  $\overline {\L}f(t,x,s,\A):= \int_{d^\A(t,x,s)}\Delta f (\zeta;t,x,s) \eta(\ud \zeta)$,   $ \A   \in\mathcal{B}(\mathbb{R})$ and  $d^\A(t,x,s):=\{\zeta \in Z:
K_1(\zeta;t,x,s)\in \A\setminus \{0\}\}.$

Since $X$ takes value in the finite set ${\mathcal D}$, by \eqref{pi_i} we only need to compute the filtering equation for the indicator functions $f_i(x)=\I_{\{x=x_i\}}$, $i=1,\dots,d$. Then, we get

\begin{equation} \label{eq:pi_i}
\ud \pi_t(f_i)=  b_i(t, S_t, \underline{\pi_t}) \ud t + \gamma_i (t, S_t, \underline{\pi_t}) \ud I_t + \int_\R w^{f_i}(t,z) (m(\ud t, \ud
z)-\nu_t^{\bH}(\ud z)\ud t),
\end{equation}

for every $i=1,\dots,d$. Here $b_i(t,s, \underline{p})$ and $\gamma_i(t,s, \underline{p})$ are measurable functions on $[0,T] \times \R^+ \times [0,1]^d$ given by
\begin{align*}
b_i(t, S_t,  \underline{\pi_t})&= \pi_t(\L^{X,S}f_i)=\int_Z \left(\sum_{j=1}^d\pi_t(f_j) \I_{\{K_0(\zeta;t,x_j)=x_i-x_j\}}-\pi_{t^-}(f_i)\right)\eta(\ud \zeta),\\
 \gamma_i (t, S_t,  \underline{\pi_t})&= h_t(f_i)= \frac{\pi_t(f_i) \mu_1(t, x_i, S_t)- \pi_t(f_i)\sum_{j=1}^d\pi_t(f_j) \mu_1(t,x_j,S_t)}{\sigma_1(t,S_t)}
\end{align*}
for every $i=1,..., d$, and $t \in [0,T]$.

\begin{remark}
It is worth stressing that in this framework the filter can be computed recursively and turns out to be the unique solution of a linear system of $d$ equations and $d$ unknowns, see, e.g. \cite{cco2}.
\end{remark}

Now, we compute the integrand in the F\"{o}llmer-Schweizer decomposition of the random variable $\xi$ in \eqref{eq:claim} with respect to $S$, whose behavior is described in \eqref{eq:sistema}, and the filtration $\bF^S$. Note that here the process $S$ has only totally inaccessible jump times, then we can apply Corollary \ref{totally inaccessible jumps}. Firstly, this requires to construct the minimal martingale measure $\P^*$ for this model. In order to use the Ansel-Stricker Theorem (see \cite{AS}), it is necessary that the process $S$ satisfies the structure condition with respect to $\bF$.
By the dynamics \eqref{eq:sistema}, we get that the canonical decomposition of $S$ with respect to $\bF$ is
given by
\[
S_t=S_0+M_t+B^\F_t, \quad t \in [0,T],
\]
where $M$ is the square-integrable $(\bF, \P)$-martingale satisfying
\[
\ud M_t= S_t \sigma_1(t, S_t) \ud W^1_t + S_{t^-} \int_Z K_1(\zeta; t, X_{t^-}, S_{t^-} ) \widetilde \N(\ud t, \ud \zeta)=S_t \sigma_1(t, S_t) \ud W^1_t +\int_\R z (m(\ud t, \ud
z)-\nu^\bF_t(\ud z)\ud t)
\]
and $B^\F=\{B^\F,\ t \in [0,T]\}$ is the $\R$-valued $\bF$-predictable process of finite variation given by
\[
\ud B^\F_t= S_{t^-}\left\{\mu_1(t, X_{t^-}, S_{t^-} )+\int_Z K_1(\zeta;t, X_{t^-}, S_{t^-})\eta(\ud \zeta)\right\} \ud t = \left\{S_{t^-}  \mu_1(t, X_{t^-}, S_{t^-})+\int_\R z \nu^\bF_t(\ud z)\right\} \ud t.
\]
Here the $\bF$-predictable quadratic variation of $M$ is absolutely continuous with respect to the Lebesgue measure, that is,
$\ds \ud \langle M\rangle_t= a_t \ \ud t$, where
\[
a_t= S^2_{t^-} \left(\sigma_1^2(t, S_{t^-}) + \int_Z K_1^2(\zeta;t,X_{t^-}, S_{t^-})\eta(\ud \zeta)\right) =S^2_{t^-} \sigma_1^2(t, S_{t^-}) +\int_\R z^2 \nu^\bF_t(\ud z), \quad t \in [0,T].
\]

Then, the semimartingale $S$ satisfies the structure conditions with respect to $\bF$,
\[
S_t=S_0+M_t+\int_0^t\alpha^\F_s \ud \langle M\rangle_s, \quad t \in [0,T]
\]
with
\begin{gather}\label{eq:alphaF}
\alpha^\F_t= \frac{\mu_1(t, X_{t^-}, S_{t^-})+ \int_{Z}K_1(\zeta; t, X_{t^-}, S_{t^-}) \eta(\ud \zeta)}{S_{t^-}  \left(\sigma_1^2(t, S_{t^-})+\int_{Z}K_1^2(\zeta; t, X_{t^-}, S_{t^-}) \eta(\ud
\zeta)\right)}=\frac{S_{t^-} \mu_1(t, X_{t^-}, S_{t^-})+\int_\R z \ \nu^\bF_t(\ud z)}{S^2_{t^-} \sigma^2_1(t, S_{t^-}) + \int_\R z^2 \nu^\bF_t(\ud z)},
\end{gather}
for every $t \in [0,T]$. Notice that, under conditions \eqref{ass:boundedness_2}, $\alpha^\F$ is well defined and  the condition $\esp{\int_0^T(\alpha_t^\F)^2 \ud \langle M \rangle_t}<\infty$ is fulfilled.
Then, we can apply  Proposition \ref{prop:N} which provides the structure condition for $S$ with respect to $\bH$, i.e.
\[
S_t=S_0+N_t+\int_0^t \alpha^\H_s \ud \langle N\rangle_s, \quad t \in [0,T],
\]
where
\begin{gather*}
\ud N_t= S_{t} \sigma_1(t, S_t)\ud I_t + \int_\R z (m(\ud t, \ud z)- \nu_t^\bH(\ud z)\ud t), \quad \alpha^\H_t=\frac{S_{t^-} \pi_{t^-} (\mu_1) + \int_{\R}z \nu_t^\bH(\ud z)
}{S^2_{t^-} \sigma_1^2(t, S_{t^-})+\int_{\R}z^2 \nu_t^\bH(\ud z)},
\end{gather*}
for every $t \in [0,T]$.\\
To introduce the minimal martingale measure  $\P^*$, we assume that
\begin{gather}
\alpha^\F_t \Delta M_t  <1, \quad \forall \ t  \ \in [0,T], \label{eq:mmm1}\\
\label{ps3}
\bE\left[\exp\left\{\frac{1}{2}\int_0^T (\alpha^\F_t)^2 \ud \langle M^c \rangle_t + \int_0^T (\alpha^\F_t)^2 \ud \langle M^d \rangle_t\right\}\right] <
\infty.
\end{gather}
A sufficient condition for  \eqref{ps3} is given by $\ds \esp{\exp\left\{2 \int_0^T \eta(D_t)\ud t\right\}}<\infty$, (see Remark  5.6 in~\cite{ccc2}).\\
Then, we can apply the Ansel-Stricker Theorem and define an equivalent change of probability measure $\ds \left.\frac{\ud \P^*}{\ud \P}\right|_{\F_T}=L_T$, where
the process $L$, given by $\ds L_t= \doleans{- \int \alpha^\F_r \ud M_r}_t$ for every $ t \in [0,T]$, is a strictly positive  $(\bF, \P)$-martingale thanks to \eqref{ps3}. Assume that $L$ is also square integrable.
Under $\P^*$, the dynamics of the pair $(X,S)$ can be written as
\begin{equation*}
\left\{
\begin{aligned}
\ud X_t&= \int_Z K_0(\zeta; t, X_{t^-}) \N(\ud t,\ud \zeta), \quad X_0=x_0 \in  \mathcal{D},\\
\ud S_t&= S_{t^-}\left\{\sigma_1(t, S_t)\ud W^*_t + \int_Z K_1(\zeta;t,X_{t^-}, S_{t^-})\widetilde \N^*(\ud t,\ud \zeta)\right\}, \quad S_0=s
>0,
\end{aligned}
\right.
\end{equation*}
where $W^*=\{W_t^*,\ t \in [0,T]\}$ is the $(\bF, \P^*)$-Brownian motion  satisfying

\begin{equation} \label{Wstar}
W^*_t=W^1_t+\int_0^tS_u \alpha_u^\F \sigma_1(u, S_u) \ud u, \ t \in [0,T]
\end{equation}

and $\widetilde \N^*(\ud t,\ud \zeta)$  is the compensated Poisson measure under $\P^*$ given by
\[
\widetilde \N^*(\ud t,\ud \zeta)= \N(\ud t,\ud \zeta)-\eta^*_t(\ud \zeta) \ud t,
\]
with $\eta^*_t(\ud \zeta)= (1-\alpha^\F_t S_{t^-}  K_1(t, X_{t^-}, S_{t^-}))\eta(\ud \zeta)$ for every $t \in [0,T]$ and $\alpha^\F$ given in \eqref{eq:alphaF}.
We will assume that
\begin{equation} \label{integrab3}
\mathbb{E}^{\P^*}\left[ \int_0^T  \left( \eta_t^*(D^0_t)+  \eta_t^*(D_t) +  \int_Z |K_0(\zeta;t, X_t)|\eta_t^*(\ud \zeta)\right)\ud t\right] <
\infty.
\end{equation}
Since the change of probability measure is Markovian, that is, $\alpha^\F_t = \alpha^\F( t, X_{t^-}, S_{t^-})$,  the process $(X,S)$ is still an $(\bF,\P^*)$-Markov process and the following result provides the structure of its $\P^*$-generator.
\begin{proposition}\label{lemma-generatore-L3}
Under condition \eqref{integrab3}, the process $(X,S)$ is an $(\bF, \P^*)$-Markov process with generator
\begin{equation} \label{generatore-L3}
\L^*_{X,S} f(t,x,s) = \frac{\partial f}{\partial t}+ \frac{1}{ 2} \sigma_1^2(t,s)\, s^2  \frac{\partial^2 f}{\partial s^2} +  \int_Z \Delta f(\zeta;t,x,s)\eta^*_t(\ud \zeta)-\frac{\partial f}{\partial
  s} s \int_Z K_1(\zeta;t,x,s) \eta^*_t(\ud \zeta),
  \end{equation}
  and for every function $f(t,x,s)$ bounded and measurable with respect to $x$ and $C^{1,2}$ with respect to $(t,s)$.
\end{proposition}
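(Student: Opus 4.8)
**The plan is to derive the $\P^*$-generator of $(X,S)$ by combining the known $\P$-generator from Proposition \ref{prop:generatore} with the Girsanov-type change of dynamics induced by the minimal martingale measure $\P^*$.** First I would recall from Proposition \ref{prop:generatore} that under $\P$ the pair $(X,S)$ has generator $\L^{X,S}$ given in \eqref{generatore}, with the semimartingale decomposition splitting $f(t,X_t,S_t)$ into a drift term $\int_0^t \L^{X,S} f \,\ud u$ and the $(\bF,\P)$-martingale $M^f$ of \eqref{eq:martingale}. The key point is that passing from $\P$ to $\P^*$ only changes the drift of $S$ and the compensator of the jump measure, while the diffusion coefficient $\sigma_1$ and the jump sizes $K_0, K_1$ are unaffected. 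Concretely, under $\P^*$ the Brownian motion becomes $W^*$ via \eqref{Wstar}, so the drift $\mu_1(t,x,s)s\,\ud t$ is replaced by $-s\,\sigma_1^2(t,s)S_{t^-}\alpha^\F_t\,\ud t$ coming from the relation $\ud W^1_t = \ud W^*_t - S_u\alpha^\F_u\sigma_1(u,S_u)\,\ud u$; and the Poisson random measure acquires the new compensator $\eta^*_t(\ud\zeta)\,\ud t = (1-\alpha^\F_t S_{t^-}K_1)\eta(\ud\zeta)\,\ud t$.

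Second, I would apply Itô's formula for jump-diffusions to $f(t,X_t,S_t)$ under the $\P^*$-dynamics of the system, using the bounded-in-$x$, $C^{1,2}$-in-$(t,s)$ regularity of $f$. The finite-variation (drift) part that accumulates is exactly the candidate generator $\L^*_{X,S}f$; the remaining terms assemble into a $\P^*$-local martingale driven by $W^*$ and $\widetilde\N^*$. The structure of \eqref{generatore-L3} reflects this cleanly: the $\partial_t f$ and $\tfrac12\sigma_1^2 s^2 \partial_{ss}f$ terms are inherited unchanged from \eqref{generatore}; the jump integral $\int_Z \Delta f(\zeta;t,x,s)\eta^*_t(\ud\zeta)$ is the $\P$-jump term of \eqref{generatore} with $\eta$ replaced by $\eta^*$; and the last term $-\partial_s f\, s\int_Z K_1(\zeta;t,x,s)\eta^*_t(\ud\zeta)$ is precisely the compensating drift needed because under $\P^*$ the jump part of $S$ is written through the compensated measure $\widetilde\N^*$, so its first-order contribution to the drift of $f$ must be subtracted. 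The $\mu_1$ drift of \eqref{generatore} has vanished, as expected, since under the martingale measure the excess return is removed.

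Third, to conclude the Markov property under $\P^*$ I would invoke that $\alpha^\F_t = \alpha^\F(t,X_{t^-},S_{t^-})$ is a deterministic function of the state (stated in the text just before the Proposition), so that the $\P^*$-coefficients $\eta^*_t(\ud\zeta)$ and the modified drift depend on $\omega$ only through $(X_{t^-},S_{t^-})$. This Markovian form of the density together with the uniqueness of the strong solution of the transformed SDE system makes $(X,S)$ an $(\bF,\P^*)$-Markov process, and the integrability assumption \eqref{integrab3} guarantees that the martingale terms are genuine $(\bF,\P^*)$-martingales (not merely local ones) and that $\L^*_{X,S}f$ is well defined and integrable. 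The argument is structurally parallel to the proof of Proposition \ref{prop:generatore}, so I would note that the proof follows the same lines with $\eta$ replaced by $\eta^*$ and the drift recomputed.

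\textbf{The main obstacle} I anticipate is bookkeeping the drift correction precisely: one must carefully track how the $\P$-drift $\mu_1 s\,\partial_s f$ in \eqref{generatore} is cancelled by the Girsanov shift in $W^1$ while the jump term simultaneously switches compensator from $\eta$ to $\eta^*$, and verify that the leftover first-order jump drift is exactly $-\partial_s f\, s\int_Z K_1\,\eta^*_t(\ud\zeta)$ rather than the analogous expression with $\eta$. Getting the signs and the $S_{t^-}$ versus $S_t$ factors right in the interplay between \eqref{Wstar}, \eqref{eq:alphaF}, and the definition of $\eta^*_t$ is the delicate part; everything else is a routine application of Itô's formula and the Ansel–Stricker framework already set up before the statement.
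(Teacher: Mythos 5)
Your argument is mathematically sound, but it is worth noting that the paper does not actually carry out this derivation: its entire proof is the single line ``The result follows by \cite[Proposition 5.7]{ccc2}.'' So you have reconstructed, from scratch, the argument that the cited reference (and the appendix proofs of Propositions \ref{prop:generatore} and \ref{Markov} in this paper, which follow exactly your template) supplies. Your route --- write the $\P^*$-dynamics of $(X,S)$ using $W^*$ from \eqref{Wstar} and the compensated measure $\widetilde\N^*$ with compensator $\eta^*_t(\ud\zeta)=(1-\alpha^\F_t S_{t^-}K_1)\eta(\ud\zeta)$, apply It\^o's formula, read off the drift as the generator, and use the Markovian form $\alpha^\F_t=\alpha^\F(t,X_{t^-},S_{t^-})$ plus condition \eqref{integrab3} to get the true martingale property and the Markov property --- is the standard and correct one. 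Your identification of the two non-obvious features of \eqref{generatore-L3} is right: the $\mu_1 s\,\partial_s f$ term disappears because the Girsanov shift $-s^2\sigma_1^2\alpha^\F$ together with the change of jump compensator cancels the whole $\P$-drift of $S$ (this is just the statement that $S$ is a $\P^*$-martingale, via the explicit formula \eqref{eq:alphaF} for $\alpha^\F$), and the residual term $-\partial_s f\,s\int_Z K_1\,\eta^*_t(\ud\zeta)$ arises because the jump part of $S$ is written against $\widetilde\N^*$ while the jumps of $f$ are collected against $\N$. What the paper's citation buys is brevity; what your self-contained derivation buys is that the reader can verify the sign and the $\eta$-versus-$\eta^*$ bookkeeping directly, which is precisely the delicate point you flag. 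The only minor imprecision is your phrasing that the drift $\mu_1 s\,\ud t$ is ``replaced by'' $-s\,\sigma_1^2 S_{t^-}\alpha^\F_t\,\ud t$: the Brownian shift alone does not remove $\mu_1$; it is only after also moving the jump compensator from $\eta$ to $\eta^*$ that the full drift cancels, which you do acknowledge later.
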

\begin{proof}
The result follows by~\cite[Proposition 5.7]{ccc2}.
\end{proof}
We denote
by $\nu_t^{\bF,*}(\ud z) \ud t$  the $(\bF,\P^*)$-predictable dual projection of the integer-valued random measure $m(\ud t,\ud z)$
and by $\nu_t^{\bH,*}(\ud z)\ud t$ its $(\bH, \P^*)$-predictable dual
projection. Then, the relationship between $\nu_t^{\bF,*}(\ud z) \ud t$ and $\nu_t^{\bH,*}(\ud z)\ud t$ can be expressed in terms of the filter with respect to $\P^*$, $\pi^*(f)=\left\{\pi^*_t(f), \ t \in [0,T]\right\}$, defined by
\begin{equation} \label{def:filtro1}\pi^*_t(f) : =  \mathbb{E}^{\P^*}[ f(t,X_t,S_t)  | \F^S_t ] = \int_\R f(t,x,S_t) \pi^*_t(\ud x),\end{equation}
for any measurable function $f(t,x,s)$ such that $\mathbb{E}^{\P^*} |f(t,X_t,S_t)|< \infty$, for every $t\in [0,T]$.
As $\pi$, even $\pi^*$ is a probability measure valued process, which provides the conditional law of the stochastic factor $X$ given the information flow, under $\P^*$.

Therefore, thanks to~\cite[Proposition 2.2]{c06}, the following relationship holds
\[
\nu_t^{\bH,*}(\ud z)\ud t=\pi_{t^-}^*(\nu^{\bF,*}(\ud z))\ud t.
\]

\begin{remark}
According to \cite[Proposition A.2]{ccc2}, the filtering equation under the minimal martingale measure $\P^*$ is given by the following Kushner-Stratonovich equation,
\begin{equation} \label{eq:ks_jumpdiff2}
\pi^*_t (f) = f(0,x_0,s_0) + \int_0^t \pi^*_s(\L^*_{X,S} f) \ud s + \int_0^t h^*_s(f)  \ud I^*_s + \int_0^t \int_\R w^{f,*}(s,z) (m(\ud s, \ud
z)-\nu_s^{\bH,*}(\ud z)\ud s), \ \ t \in [0,T],
\end{equation}
for every function $f(t,x,s)$ bounded and measurable with respect to $x$ and $C^{1,2}$ with respect to $(t,s)$, where
\begin{equation} \label{eq:h_jumpdiff2}
 h^*_t(f)= S_t \sigma_1(t,S_t) \pi^*_t \left(\frac{\partial f }{\partial
 s}\right),
 \end{equation}
\begin{equation}\label{eq:w_jumpdiff2}
w^{f,*}(t,z)= \frac{\ud \pi^*_{t^-} (f \nu^{\bF,*})}{\ud \nu^{\bH,*}_t} (z) - \pi^*_{t^-}(f) +
  \frac{ \ud \pi_{t^-}^* (\overline{\L} f)}{\ud \nu^{\bH,*}_{t}} (z),
\end{equation}
and $\L^*_{X,S}$ is given in \eqref{generatore-L3}.

The process $I^*=\{I^*_t, \ t \in [0,T]\}$ is the $(\bH, \P^*)$-Brownian motion given by
\begin{equation} \label{I*}
I^*_t=W^*_t+ \int_0^t\left\{\frac{b(u, X_u, S_u)}{\sigma_1(u, S_u)} - \pi^*_u\left(\frac{b}{\sigma_1}\right)\right\}\ud u,\quad t \in [0,T],
\end{equation}
with $\ds b(t, X_t, S_t)= \int_Z K_1(\zeta; t , X_t, S_t) \ \eta_t^*(\ud \zeta)$, for every $t \in [0,T]$.

Since $X$ assumes finitely many values, we can also characterize the filter in terms of the conditional probabilities under $\P^*$ with the choice $f(x)=f_i(x) = 1_{\{x=i\}}$, $i=1,..,d$. Denoting by $\underline{\pi^*}:=\{\underline{\pi^*_t}, t \in [0,T]\}$ the vector process $\underline{\pi^*_t}=(\pi^*_t(f_1),\dots,\pi_t^*(f_d))$, we get
\begin{equation} \label{KSfilterF*}
\pi^*_t (f_i) = f_i(0,x_0,s_0) + \int_0^t b^*_i(s, S_s,  \underline{ \pi^*_s}) \ud s + \int_0^t \int_\R w^{f_i,*}(s,z) (m(\ud s, \ud
z)-\nu_s^{\bH,*}(\ud z)\ud s),
\end{equation}
where $b^*_i(t,s, \underline{p})$ are measurable functions on $[0,T] \times \R^+\times [0,1]^d$ satisfying
\begin{align*}
b^*_i(t, S_t,  \underline{\pi^*_t})&= \int_Z \left(\sum_{j=1}^d\pi^*_t(f_i) \I_{\{K_0(\zeta;t,x_j)=x_i-x_j\}}-\pi^*_{t}(f_i)\right)\eta^*_t(\ud \zeta)
\quad i=1,..., d.\end{align*}
\end{remark}

The following result shows that the vector $(X,S,\underline{\pi})$ is an $(\bF, \P^*)$-Markov process and provides its $\P^*$-generator.% For the sake of simplicity we indicate the components of the vector $\underline{p}$ by $(p_1,\dots,p_d)$.
%STRESSARE CHE LA TRIPLETTA E P* MARKOV

\begin{proposition}\label{Markov}
Assume that condition \eqref{integrab3} holds. Then, the vector $(X,S, \underline{\pi})$ is an $(\bF, \P^*)$-Markov process with generator $\L^*$ given by
\begin{align}
\L^* f(t,x,s, \underline{p}) &= \frac{\partial f}{\partial t}+  l(t,x,s,\underline{p}) \sum_{i=1}^d \gamma_i(t,s,\underline{p}) \frac{\partial f}{\partial p_i} + \sum_{i=1}^d b_i(t,s,\underline{p}) \frac{\partial f}{\partial p_i}\nonumber\\
&-\sum_{i=1}^d  \int_\R \frac{\partial f}{\partial p_i}w_i(t,x,s,\underline{p},z)\nu^\bH(t, x,s,\underline{p},\ud z) + \frac{1}{ 2} \sigma^2_1(t,s)\, s^2  \frac{\partial^2 f}{\partial s^2}  \nonumber \\
&+  \sum_{i=1}^d \sigma_1(t,s)\, s  \gamma_i(t,s,\underline{p}) \frac{\partial^2 f}{\partial s \partial p_i }+ \frac{1}{ 2} \sum_{i,j=1}^d \gamma_i(t,s,\underline{p})  \gamma_j(t,s,\underline{p}) \frac{\partial^2 f}{\partial p_i \partial p_j }\nonumber\\
& + \int_Z \Delta f(\zeta;t,x,s,  \underline{p})\eta^*(t,x,s,\ud \zeta)-
\frac{\partial f}{\partial  s} s \int_Z K_1(\zeta;t,x,s) \eta^*(t,x,s,\ud \zeta), \label{generatore-L4}
\end{align}
for every function $f(t,x,s,\underline{p})$ bounded and measurable with respect to $x$ and $C^{1,2,2}$ with respect to $(t,s, \underline{p})$, where $w_i(t,x,s,\underline{p},z)$ is the measurable function such that $w_i(t, X_{t^-}, S_{t^-}, \underline{\pi_{t^-}},z)=w^{f_i}(t,z)$ with $w^{f_i}(t,z)$ given in \eqref{eq:w_jumpdiff1} with the choice $f(t,x,s)=f_i(x)=\I_{\{x=x_i\}}$,
\begin{align*}
&l(t,x,s,\underline{p}):= \frac{\mu_1(t, x,s)-\sum_{i=1}^d \mu_1(t, x_i,s) p_i}{\sigma_1(t, s)} - s \sigma_1(t, s) \alpha^\F(t,x,s),\\
&\eta^*(t,x,s,\ud\zeta):=[1-\alpha^\F(t,x,s) \ s \ K_1(t,x,s)]\eta(\ud \zeta),\\
&\nu^\bH(t,x,s,\underline{p}, \ud z):=\sum_{i=1}^k\nu^\bF(t,x_i,s,\ud z) p_i,\\
&\Delta f(\zeta;t,x,s, \underline{p}):=  f \big (t,x+K_0(\zeta;t,x), s( 1 +K_1(\zeta; t,x, s)), \underline{p} + \Delta \underline{p}(\zeta; t,x, s) \big)-f(t,x,s, \underline{p}),\\
&\Delta \underline{p}(\zeta; t,x, s) := ( \Delta p_1, \dots , \Delta p_d),
\end{align*}
and
$$  \Delta p_i := w^{f_i}( t, s  \ K_1(\zeta; t,x, s)) \ \I_{\{ K_1(\zeta; t,x, s) \neq 0\}}(\zeta), \quad i=1,..., d.$$
\end{proposition}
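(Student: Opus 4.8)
The plan is to exhibit $(X,S,\underline{\pi})$ as the solution of a closed state-dependent system of SDEs under $\P^*$, driven by the $(\bF,\P^*)$-Brownian motion $W^*$ of \eqref{Wstar} and by the Poisson random measure $\N$ (whose $(\bF,\P^*)$-compensator is $\eta^*_t(\ud\zeta)\ud t$), and then to read off the generator by an application of It\^o's formula under $\P^*$. First I would collect the three coordinate dynamics. The pair $(X,S)$ evolves as recalled before Proposition \ref{lemma-generatore-L3}, namely $\ud X_t=\int_Z K_0(\zeta;t,X_{t^-})\N(\ud t,\ud\zeta)$ and $\ud S_t=S_{t^-}\sigma_1(t,S_t)\ud W^*_t+S_{t^-}\int_Z K_1(\zeta;t,X_{t^-},S_{t^-})\widetilde\N^*(\ud t,\ud\zeta)$. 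Since these coefficients, together with $\alpha^\F_t=\alpha^\F(t,X_{t^-},S_{t^-})$ and the functions $b_i,\gamma_i,w^{f_i}$ attached to the filter, depend on $\omega$ only through the current value of $(X_{t^-},S_{t^-},\underline{\pi_{t^-}})$, the triple solves a well-posed state-dependent system; Markovianity then follows from existence and uniqueness of its solution (recall that the finite-dimensional filter is computed recursively, see \eqref{KSfilterF*} and \cite{cco2}) together with \cite[Chapter 4, Proposition 1.7]{ek86}, exactly as in Proposition \ref{lemma-generatore-L3}.

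The crux of the argument is the representation of the filter coordinate in the \emph{full} filtration $\bF$ under $\P^*$. The filtering equation \eqref{eq:pi_i} is the $(\bF^S,\P)$-decomposition of $\pi(f_i)$, written through the $(\bH,\P)$-innovation $I$ and the $(\bH,\P)$-compensated measure $m-\nu^\bH$; these are $\bH$-martingales but \emph{not} $\bF$-martingales, so they must be re-expressed before It\^o's formula can be applied. Combining \eqref{eq:innovation} with \eqref{Wstar} gives $\ud I_t=\ud W^*_t+l(t,X_t,S_t,\underline{\pi_t})\,\ud t$, with $l$ as in the statement, while the genuine jumps of the filter, $\Delta\pi_t(f_i)=w^{f_i}(t,\Delta S_t)$, are driven by the atoms of $\N$ through $z=S_{t^-}K_1(\zeta;t,X_{t^-},S_{t^-})$. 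Substituting, I obtain the $(\bF,\P^*)$-representation
\begin{equation*}
\ud\pi_t(f_i)=\Big[b_i+\gamma_i\,l-\int_\R w^{f_i}(t,z)\,\nu^\bH_t(\ud z)\Big]\ud t+\gamma_i\,\ud W^*_t+\int_Z w^{f_i}\big(t,S_{t^-}K_1(\zeta;t,X_{t^-},S_{t^-})\big)\,\N(\ud t,\ud\zeta),
\end{equation*}
in which the $(\bH,\P)$-compensator $\nu^\bH$ inherited from \eqref{eq:pi_i} now appears as a genuine drift, while the jumps are left uncompensated so as to be handled by the $(\bF,\P^*)$-intensity $\eta^*$.

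Finally I would apply the $(\bF,\P^*)$-It\^o formula to $f(t,X_t,S_t,\underline{\pi_t})$ for $f\in C^{1,2,2}$, bounded and measurable in $x$, and group the terms. The continuous martingale parts of $S$ and of each $\pi(f_i)$ are driven by the single Brownian motion $W^*$, with coefficients $S_{t^-}\sigma_1$ and $\gamma_i$ respectively; hence, since $X$ is purely discontinuous, the second-order contributions are precisely $\tfrac12\sigma_1^2 s^2\partial_{ss}f$, $\sum_i\sigma_1 s\,\gamma_i\,\partial^2_{sp_i}f$ and $\tfrac12\sum_{i,j}\gamma_i\gamma_j\,\partial^2_{p_ip_j}f$. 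The absolutely continuous drifts yield $\partial_t f$, the filter contribution $l\sum_i\gamma_i\partial_{p_i}f+\sum_i b_i\partial_{p_i}f-\sum_i\partial_{p_i}f\int_\R w_i\,\nu^\bH(\ud z)$, and the term $-\partial_s f\,s\int_Z K_1\eta^*(\ud\zeta)$ coming from the $\P^*$-compensation of the jumps of $S$ in Proposition \ref{lemma-generatore-L3}. Compensating the joint jumps of $(X,S,\underline{\pi})$ with $\eta^*$ produces $\int_Z\Delta f\,\eta^*(\ud\zeta)$, with $\Delta f$ and $\Delta\underline p$ exactly as in the statement. Summing these contributions reproduces \eqref{generatore-L4}, and it remains to check, using \eqref{integrab3} and \eqref{ass:boundedness_2}, that the remainder is a true $(\bF,\P^*)$-martingale, so that $\L^*$ is indeed the generator.

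The hard part will be exactly the filtration change in the jump part just described: one must resist treating the $\bF^S$-compensated term $m-\nu^\bH$ as an $\bF$-martingale. Tracking carefully that the $\bF^S$-compensator $\nu^\bH$ survives as a drift while the true jumps are recompensated with the $\bF$-intensity $\eta^*$ is what makes both $\nu^\bH$ and $\eta^*$ appear simultaneously in \eqref{generatore-L4}; the accompanying integrability bookkeeping needed to upgrade the local martingale remainder to a genuine martingale is the routine but essential technical companion.
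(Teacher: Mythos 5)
Your proposal is correct and follows essentially the same route as the paper: apply It\^o's formula under $\P^*$ to $f(t,X_t,S_t,\underline{\pi_t})$, identify the drift with $\L^*$ and the remainder as a martingale driven by $W^*$ and $\widetilde{\N}^*$, then verify integrability as in Proposition \ref{prop:generatore} to conclude via the martingale problem. The only difference is one of exposition: you make explicit the rewriting $\ud I_t=\ud W^*_t+l\,\ud t$ and the re-expression of the filter's jump term through $\N$ (which is why both $\nu^\bH$ and $\eta^*$ appear in \eqref{generatore-L4}), a step the paper's proof leaves implicit.
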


The proof is postponed to Appendix \ref{appendix:technical}.

Finally, the following theorem provides the explicit expression of the integrand in the F\"{o}llmer-Schweizer decomposition under partial information of $\xi$, given in \eqref{eq:claim},  with respect to $S$ and  $\bF^S$ for the Markovian model described in this section.
In the sequel we use the following notation:
\begin{gather}
g^i_t=g(t, x_i, S_t, \underline{\pi_t}), \quad g^i_{t^-}=g(t, x_i, S_{t^-}, \underline{\pi_{t^-}})\\
 \Delta g^i_t(z)=\Delta g(z;t, x_i, S_t, \underline{\pi_t})=g\left(t, x_i, S_{t^-}+z, (\pi^i_{t^-}+w^{f_i}(t,z))_{i=1,\dots,d}\right)-g(t, x_i, S_{t^-}, \underline{\pi_{t^-}})
\end{gather}

\begin{theorem}\label{thm:IntegrandFS}
Let $g(t, x, s, \underline{p} )$ be a measurable function on  $[0,T] \times \R\times \R^+\times[0,1]^d$ that solves problem \eqref{eq:problema1}. Then the integrand in F\"{o}llmer-Schweizer decomposition of $\xi$, see \eqref{eq:claim}, with respect to $\bF^S$ and $S$ is given by
\[
\beta^\H_t= H^\H_t+\phi^\H_t, \quad t \in [0,T],
\]
where

\begin{equation}\label{eq:Hh}
\begin{split}
 H^{\H}_t= & \frac{\sum_{i=1}^d S_{t^-} \sigma_1(t, S_{t^-})\pi_{t^-}^*(f_i)\left(\frac{\partial g^i_{t^-}}{\partial s} S_{t^-} \sigma_1(t, S_{t^-}) + \sum_{j=1}^d\frac{\partial g^i_{t^-}}{\partial p^j}\gamma_j(t, S_{t^-}, \underline{\pi_{t^-}})\right)}{S^2_{t^-} \sigma^2_1(t,S_{t^-})  +  \int_\R  z^2 \nu _t^{\bH,*} (\ud z)}\\
& + \frac{\sum_{i=1}^d \int_\R \left\{\Delta g^i_t(z)(\pi_{t^-}^*(f_i)+w^{f_i,*}(t,z)) + g^i_{t^-} w^{f_i,*}(t,z)\right\} z \ \nu _t^{\bH,*} (\ud z)} {S^2_{t^-} \sigma^2_1(t,S_{t^-})  +  \int_\R  z^2 \nu _t^{\bH,*} (\ud z)},
\end{split}
\end{equation}

\begin{equation} \label{eq:phih}
\phi^\H_t = \frac{   \int_\R  \left\{ \sum_{i=1}^d \int_\R \left\{\Delta g^i_t(z)(\pi_{t^-}^*(f_i)+w^{f_i,*}(t,z)) + g^i_{t^-} w^{f_i,*}(t,z)\right\} z  - H^\H_t  z^2 \right\} z \alpha_t^\H \nu^\bH_t(\ud z)}  {S^2_{t^-} \sigma^2_1(t,S_{t^-})  +  \int_\R  z^2 \nu _t^{\bH} (\ud z)},
 \end{equation}
for every $t \in [0,T]$.
\end{theorem}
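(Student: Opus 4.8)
The plan is to apply Corollary \ref{totally inaccessible jumps}, which is legitimate since the semimartingale $S$ in \eqref{eq:sistema} is a jump-diffusion whose jump times are driven by the Poisson random measure $\N(\ud t,\ud\zeta)$ and are therefore $(\bF,\P)$-totally inaccessible. Consequently the finite variation parts of $S$ under both $\bF$ and $\bH$ are continuous, so ${}^\bH\langle N\rangle = {}^\bH\langle S\rangle$, and the integrand decomposes as $\beta^\H_t = H^\H_t + \phi^\H_t$ with $H^\H$ given by \eqref{eq:strategia_filtro1} and $\phi^\H$ by \eqref{phiH2}. The whole task is then to make these two abstract brackets explicit by exploiting the filtering representation of $V^\H$ established in Proposition \ref{VH} together with the explicit $\P^*$-generator $\L^*$ from Proposition \ref{Markov}.

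The core computation proceeds as follows. By Lemma \ref{lemma:caratterizzazioneG1}, the function $g$ solving problem \eqref{eq:problema1} satisfies $g(t,X_t,S_t,\underline{\pi_t}) = \bE^{\P^*}[\pi_T(H)\mid \F_t]$, so by Proposition \ref{VH} we have $V^\H_t = \bE^{\P^*}[g(t,X_t,S_t,\underline{\pi_t})\mid\H_t]$, and by the Markov property under $\P^*$ this conditional expectation equals $\sum_{i=1}^d \pi^*_t(f_i)\, g(t,x_i,S_t,\underline{\pi_t})$. Applying the $(\bF,\P^*)$-generator $\L^*$ of Proposition \ref{Markov} to $g$ (which is $\L^*$-harmonic on $[0,T)$ by \eqref{eq:problema1}), I would compute the martingale part of $V^\H$ under $\P^*$ via the Dol\'eans--Dade/It\^o formula, reading off both its continuous component (the coefficient of $\ud I^*_t$, hence of $\ud W^*_t$) and its jump component (the coefficient against $m(\ud t,\ud z)-\nu^{\bH,*}_t(\ud z)\ud t$). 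The continuous part produces the term in $S_{t^-}\sigma_1 \pi^*_{t^-}(f_i)\bigl(\tfrac{\partial g^i}{\partial s}S_{t^-}\sigma_1 + \sum_j \tfrac{\partial g^i}{\partial p_j}\gamma_j\bigr)$, while the jump part yields, for each jump size $z$, the increment $\Delta g^i_t(z)(\pi^*_{t^-}(f_i)+w^{f_i,*}(t,z)) + g^i_{t^-}w^{f_i,*}(t,z)$ coming from differentiating the product $\pi^*_t(f_i)g^i_t$ across a jump. Inserting these into \eqref{eq:strategia_filtro1}, i.e. forming the ratio ${}^{*,\bH}\langle V^\H,S\rangle / {}^{*,\bH}\langle S\rangle$ with $\ud {}^{*,\bH}\langle S\rangle_t = \bigl(S^2_{t^-}\sigma_1^2 + \int_\R z^2\nu^{\bH,*}_t(\ud z)\bigr)\ud t$, delivers \eqref{eq:Hh}.

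For $\phi^\H$ I would follow the characterization \eqref{phiH2}, which requires the bracket ${}^\bH\langle[G,S],\int_0^\cdot \alpha^\H_r\ud N_r\rangle$ computed under the physical measure $\P$. Here $G$ is the orthogonal part of the Galtchouk--Kunita--Watanabe decomposition \eqref{GKW2} of ${}^o\xi$ under $\P^*$, so $[G,S]$ captures the residual (pure-jump) covariation that $H^\H\ud S$ does not account for; its jump at a jump of $S$ of size $z$ is precisely $\bigl(\Delta V^\H - H^\H z\bigr)z$, whose $(\bH,\P^*)$-predictable projection against $z$ produces the numerator bracket. Pairing this against $\int_0^\cdot \alpha^\H_r\ud N_r$, whose pure-jump part integrates $z$ against $m-\nu^\bH$, and dividing by $\ud{}^\bH\langle S\rangle_t = \bigl(S^2_{t^-}\sigma_1^2 + \int_\R z^2\nu^\bH_t(\ud z)\bigr)\ud t$ gives \eqref{eq:phih}.

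The main obstacle is bookkeeping the change of measure correctly: $H^\H$ and the bracket defining it live under $\P^*$ (hence feature $\pi^*$, $\nu^{\bH,*}$, $w^{f_i,*}$), whereas $\phi^\H$ in \eqref{phiH2} is a bracket under $\P$ (hence $\alpha^\H$, $\nu^\bH$ appear), so one must carefully track which filtering kernel and which compensator enters each term and verify that the jump contribution $\Delta g^i_t(z)(\pi^*_{t^-}(f_i)+w^{f_i,*}(t,z)) + g^i_{t^-}w^{f_i,*}(t,z)$ is the same object appearing in both numerators. A secondary technical point is justifying that $g$ belongs to $D(\L^*)$ and is regular enough ($C^{1,2,2}$ in $(t,s,\underline{p})$) for the It\^o--L\'evy expansion driving the generator computation to be valid; this I would handle by invoking the standing assumptions \eqref{ass:boundedness_2}, the integrability conditions \eqref{integrabP1}--\eqref{integrab3}, and the regularity hypotheses under which Proposition \ref{Markov} and Lemma \ref{lemma:caratterizzazioneG1} were stated.
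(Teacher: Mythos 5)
Your proposal is correct and follows essentially the same route as the paper's proof: writing $V^\H_t=\sum_i \pi^*_t(f_i)\,g(t,x_i,S_t,\underline{\pi_t})$, applying It\^o's formula together with the vanishing of the drift via problem \eqref{eq:problema1} to extract the continuous and jump martingale parts of $V^\H$ under $\P^*$, forming the ratio \eqref{eq:strategia_filtro1} for $H^\H$, and then obtaining $\phi^\H$ from $\ud G_t=\ud V^\H_t-H^\H_t\,\ud S_t$ and the bracket in \eqref{phiH2} via Corollary \ref{totally inaccessible jumps}. The measure-bookkeeping caveat you raise ($\P^*$-objects in $H^\H$ versus $\P$-objects in $\phi^\H$) is exactly the point the paper's computation implicitly handles.
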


\begin{proof}
Notice that the $(\bH, \P^*)$-semimartingale decomposition of $S$  is given by
\begin{equation}\label{eq:Hsemimartingala}
\ud S_t = S_t \sigma_1(t, S_t) \ud I^*_t + \int_\R z ( m(\ud t, \ud z) - \nu_t ^{\bH,*} (\ud z) \ud t ).
\end{equation}
Now, we recall that
$$
V_t^\H = \bE^{\P^*}\left[ g(t, X_t, S_t, \underline{\pi_t})   \Big| \H_t\right] = \sum_{i=1}^d   g^i_t \pi^*_t(f_i)\quad t \in [0,T].  $$

To compute the dynamics of $V^\H$ we first apply the It\^{o} formula to $g(t, x_i, S_t, \underline{\pi_t})$. Taking the dynamics of
$\underline{\pi}$ (see \eqref{eq:pi_i}) and formulas \eqref{eq:innovation}, \eqref{Wstar} and \eqref{I*} into account, we get

\begin{equation}
\ud g^i_t=h^i_t\ud t+\left\{\frac{\partial g^i_t}{\partial s}S_t \ \sigma_1(t, S_t)+\sum_{j=1}^d\frac{\partial g^i_t}{\partial p^j}\gamma_j(t, S_t, \underline{\pi_t})\right\}\ud I^*_t+\int_{\R}\Delta g^i_t(z)(m(\ud t, \ud z)-\nu^{\bH, *}(\ud z)\ud t)
\end{equation}
for a suitable $\bH$-predictable process $h^i=\{h^i_t, \ t \in [0,T]\}$.

In particular, since the function $g$ is the solution of problem \eqref{eq:problema1}, the dynamics of $V^\H$ is given by
\begin{align*}
\ud V_t^\H =&\sum_{i=1}^d\pi_t^*(f_i)\left(\frac{\partial g^i_t}{\partial s} S_t \sigma_1(t, S_t)+\sum_{j=1}^d\frac{\partial g^i_t}{\partial p^j}\gamma_j(t, S_{t}, \underline{\pi_t})\right)\ud I^*_t  \\
  & +\sum_{i=1}^d \int_Z \left\{\Delta g^i_t(z) \Big(\pi_{t^-}^*(f_i)+w^{f_i,*}(t,z)\Big) + g^i_{t^-} w^{f_i,*}(t,z) \right\} (m(\ud t, \ud z)-\nu_t^{\bH, *}(\ud z)\ud t).
  \end{align*}

Then, the integrand in the Galtchouk-Kunita-Watanabe decomposition of ${}^o\xi$ under $\P^*$ is given by
\begin{align}
H^\H_t& =  \frac{\ud {}^{*,\bH}\langle V^\H, S\rangle_t}{\ud {}^{*,\bH} \langle S\rangle_t},\quad t \in [0,T]
\end{align}
which yields \eqref{eq:Hh}. In order to compute the integrand $\beta^\H$ in the F\"{o}llmer-Schweizer decomposition of $\xi$, we observe that
$$\ud G_t = \ud V^\H_t - H^\H_t \ud S_t$$
and
\begin{align*}
{}^\bH\left\langle [G, S], \int_0^\cdot \alpha_s^\H \ud N_s \right\rangle_t
= {}^\bH\left\langle \sum_{r\leq t} \Delta G_r \Delta S_r , \int_0^t \alpha_s^\H \int_\R z ( m(\ud r, \ud z) - \nu_r^\bH (\ud z) \ud r  ) \right\rangle.
\end{align*}
Finally, by Corollary \ref{totally inaccessible jumps}
we get that $ \beta^\H_t= H^\H_t+\phi^\H_t,$ where $\phi^\H_t = \frac{\ud {}^\bH\langle [G, S], \int_0^\cdot \alpha_s^\H \ud N_s \rangle_t}{\ud {}^\bH \langle N \rangle_t}$ is given by \eqref{eq:phih}.
\end{proof}

\begin{remark}
Note that, if the process $S$ has continuous trajectories, then by Theorem \ref{thm:IntegrandFS} we get
\begin{equation}
\beta_t^\H = H_t^\H =\sum_{i=1}^d \pi_{t^-}^*(f_i)\frac{\partial g^i_{t^-}}{\partial s}  + \frac{\sum_{i=1}^d\sum_{j=1}^d\pi_{t^-}^*(f_i)\frac{\partial g^i_{t^-}}{\partial p^j}\gamma_j(t, S_{t^-}, \underline{\pi_{t^-}})}{S_{t^-} \sigma_1(t,S_{t^-})}.
\end{equation}

In fact, in this case the F\"ollmer-Schweizer decomposition under $\P$ of a given random variable coincides with the corresponding Galtchouk-Kunita-Watanabe decomposition under $\P^*$, see \cite[Theorem 3.5]{s01}.

If in addition, the random variable $\xi$ turns out to be $\H_T$-measurable, then
\begin{equation} \label{eq:Hp}
\beta_t^\H = H_t^\H= \sum_{i=1}^d \pi_{t^-}^*(f_i)\frac{\partial g^i_{t^-}}{\partial s}  = {}^{p,*}\beta^\F_t,
\end{equation}
where ${}^{p,*}\beta^\F$ corresponds to the $(\bH,\P^*)$-predictable projection of the process $\beta^\F=\{\beta^\F_t, \ t \in [0,T]\}$, where $\beta_t^\F =  \frac{\partial g(t, X_{t^-}, S_{t^-})}{\partial s} $, and $g(t,x,s)$ is characterized in \cite[Lemma 5.1]{ccc2}.
The process $\beta^\F$ represents the integrand in the F\"ollmer-Schweizer decomposition of $\xi$ under complete information, that coincides with the Galtchouk-Kunita-Watanabe decomposition of $\xi$ with respect to $\bF$ and $S$ under the minimal martingale measure $\P^*$. We remark that relation \eqref{eq:Hp} also holds thanks to \cite[Proposition 4.6]{ccc2}.
\end{remark}

\bibliographystyle{plain}
\bibliography{biblio2}

\appendix
\section{Technical results}\label{appendix:technical}

\begin{proof}[Proof of Proposition \ref{prop:generatore}]

By applying It\^{o}'s formula to the function $f(t,X_t,S_t)$, we get
\[
f(t, X_t, S_t)= f(0, x_0, s_0) + \int_0^t \L^{X,S}f (r,X_r,S_r)\ud r + M^f_t,
\]
where $\ds \L^{X,S}$ is the operator in \eqref{generatore} and $M^f$ is given in \eqref{eq:martingale}.
Moreover, under conditions \eqref{integrabP1} and \eqref{integrabP},
the process $M^f$ is an $(\bF, \P)$-martingale; indeed
\[
\bE\left[
\int_0^T  \sigma_1^2(t,S_t) S^2_t \left( \frac{\partial f}{\partial s} \right)^2 \ud t\right] < \|f\|\bE\left[
\int_0^T  \sigma_1^2(t,S_t) S^2_t \ud t\right]<\infty
\]
and
\[
\bE\left[\int_0^T \int_Z |\Delta f(\zeta;t,X_{t^-},S_{t^-})| \eta(\ud \zeta)\;\ud t\right]
\leq 2 \|f\| \bE\left[\int_0^T \{ \eta(D^0_t) + \eta(D_t)\} \ud t\right]  < \infty ,
\]
where $\|f\|=\sup\{f(t,x,s)+\frac{\partial f}{\partial s}(t,x,s)|(t,x,s) \in  \R^+ \times \R \times \R^+\}$.  Then the pair $(X,S)$ is a solution of the martingale problem for the operator $\L^{X,S}$ which implies that $(X,S)$ is an $(\bF, \P)$-Markov process.
\end{proof}

\begin{proof}[Proof of Proposition \ref{Markov}]
By applying It\^{o}'s formula to the function $f(t,X_t,S_t, \underline{\pi_t})$, then we get that
\[
f(t, X_t, S_t, \underline{\pi_t})= f(0, x_0, s_0, \underline{\pi_0}) + \int_0^t \L^*f (r,X_r,S_r, \underline{\pi_r})\ud r + M^{*,f}_t,
\]
where $\ds \L^*$ is the operator in \eqref{generatore-L4} and $M^{*,f}$ is given by
\[
\ud M^{*,f}_t= \left(\frac{\partial f}{\partial s}(t,X_t,S_t, \underline{\pi_t})S_t\sigma_1(t, S_t) + \sum_{j=1}^d\frac{\partial f}{\partial p_i} \gamma_i(t, S_t, \underline{\pi_t}) \right)\ud W^*_t + \int_Z \Delta f(\zeta;t,X_{t^-},S_{t^-}, \underline{\pi_{t^-}})\widetilde{\mathcal{N}}^*(\ud t, \ud \zeta).
\]
Then, arguing as in the proof of Proposition \ref{prop:generatore}, we get the statement.
\end{proof}

\end{document}